\newlength{\stefan}
\DeclareMathSymbol{\subsetneq}{\mathord}{AMSb}{"26}
\newtheorem{conj}{Conjecture}
\newtheorem{prob}{Problem}
\newtheorem{lemma}{Lemma}[section]
\newtheorem{theorem}[lemma]{Theorem}
\newtheorem{corollary}[lemma]{Corollary}
\theoremstyle{definition}
\newtheorem{definition}[lemma]{Definition}
\newtheorem{remark}[lemma]{Remark}
\newcommand{\lp}{\longrightarrow}
\newcommand{\mb}{\mathbb}
\newcommand{\G}{\mathcal{G}}
\newcommand{\F}{\mathbb{F}}
\newcommand{\E}{\mathcal{E}}
\newcommand{\C}{\mb{C}}
\newcommand{\R}{\mb{R}}
\newcommand{\Z}{\mb{Z}}
\newcommand{\N}{\mb{N}}
\newcommand{\Q}{\mb{Q}}
\newcommand{\desda}{\Longleftrightarrow}
\newcommand{\Aff}{\textup{Aff}}
\newcommand{\DER}{\textup{DER}}
\renewcommand{\ker}{\operatorname{ker}}
\renewcommand{\deg}{\operatorname{deg}}
\newcommand{\LND}{\textup{LND}}
\newcommand{\LFD}{\textup{LFD}}
\newcommand{\K}{k}
\newcommand{\GA}{\textup{GA}}
\newcommand{\GL}{\textup{GL}}
\newcommand{\kar}{\operatorname{char}}
\newcommand{\TA}{\textup{T}}
\title{The Nagata automorphism is shifted linearizable}
\author{\begin{tabular}{ll}
Stefan Maubach\footnote{Funded by Veni-grant of council for the
physical sciences, Netherlands Organisation for scientific research (NWO).
Partially funded by the Mathematisches
Forschungsinstitut Oberwolfach as an Oberwolfach-Leibniz-Fellow.}&Pierre-Marie Poloni\\
\ \\
\small
Radboud University Nijmegen&\small Institut Math\'ematiques de Bourgogne\\
\small Postbus 9010, 6500 GL Nijmegen &\small 9, avenue Alain Savary, BP 47870\\  
\small The Netherlands&\small 21078 Dijon Cedex, France\\
\small s.maubach@math.ru.nl& \small ppoloni@u-bourgogne.fr
\end{tabular}}
\begin{document}

\maketitle

\begin{abstract}
A polynomial automorphism $F$ is called {\em shifted linearizable} if there exists a linear map $L$ such that $LF$ is linearizable.
We prove that the Nagata automorphism $N:=(X-Y\Delta -Z\Delta^2,Y+Z\Delta, Z)$ where $\Delta=XZ+Y^2$ is shifted linearizable.
More precisely, defining $L_{(a,b,c)}$ as the diagonal linear map having $a,b,c$ on its diagonal, we prove that if
$ac=b^2$, then $L_{(a,b,c)}N$ is linearizable if and only if $bc\not = 1$.
We do this as part of a significantly larger theory: for example, any exponent of a homogeneous locally finite derivation is shifted linearizable.
We pose the conjecture that the group generated by the linearizable automorphisms may generate the group of automorphisms,
and explain why this is a natural question.
\end{abstract}

\section{Preliminaries}
\label{pre}

\subsection{Introduction}

One of the main problems in affine algebraic geometry is to
understand the polynomial automorphism group of affine spaces. In
particular, it would be very useful to find some generators of
these groups. The case of dimension one is easy : every
automorphism of the affine line is indeed affine. (For a
polynomial map, to be affine  means to be of degree 1.)

In dimension two, the situation is well known too. The Jung-van
der Kulk-theorem asserts that the automorphism group of the affine
plane is generated by affine and de Joncqui\`ere subgroups \cite{Jung, Kulk}.
 Therefore, every automorphism of $\mathbb{A}^2$ is called
\emph{tame}.

The case of dimension 3 is still open. Recently, Umirbaev and
Shestakov solved in \cite{SU04, SU04a}, the thirty years old {\em
tame generators problem} by proving that some automorphism of
$\mathbb{C}^3$ are not tame and in particular that the famous
Nagata map is non tame.

Actually, there are several  candidate generator sets for the
automorphism group of $\mathbb{A}^n$ (see section \ref{final}).

Nevertheless, from a ``geometric point of view'', it is important to
find generators which do not depend on choice of coordinates.
Related, finding normal subgroups of the automorphism group, is important in itself (and almost the same question, actually).
 Notice
that, since a non tame automorphism may be conjugate to a tame
one (theorem \ref{NagataIsInLIN} gives such an example), the notion of tame automorphism is not a relevant geometric
notion.

Therefore, it seems natural to define \emph{tamizable}
automorphisms, i.e. automorphisms which are conjugate to a tame
one. In particular, it leads us to the following questions :

\begin{enumerate}
\item Is the Nagata automorphism tamizable?

\item Are all automorphisms of $\mathbb{C}^3$ tamizable?
\end{enumerate}

Note that if the answer to the first question is negative, then it
will be very difficult to prove it. (The concept of degree is not
invariant under conjugation, and so, the proof of
Umirbaev-Shestakov does not  give ideas for this.)

In this paper, we will investigate the second question and study
what consequences a positive answer will give. It will lead us to
consider the subgroup
$\textup{GLIN}_n(\C)\subseteq\textup{GA}_n(\C)$ generated by
linearizable automorphisms. It turns out that this group contains
all tame automorphisms, and, more surprising, that the Nagata
automorphism belongs to $\textup{GLIN}_3(\C)$.

More precisely, we will show that {\em ``twice Nagata''} is even
linearizable! ``Twice Nagata'' stands for the map $(2I)\circ N$,
i.e. each component of the Nagata automorphism multiplied by 2. Then
\[ N^{\frac{4}{3}} (2N)   N^{\frac{-4}{3}}=2I\]
as explained in theorem \ref{NagataIsInLIN}.
In fact, we will
prove that  if $D$ is a homogeneous locally finite derivation on
$\C^{[n]}$, then there exists $s\in\C^*$ such that
$s\exp(D)=(sI)\circ\exp(D)$ is linearizable. We say that $\exp(D)$
is {\em shifted linearizable}.

In the analytic realm, this is a known local fact, due to the
Poincar\'e-Siegel theorem (see \cite{A83}, chapter 5, or 8.3.1. of
\cite{Essenboek}). Roughly, this theorem states that for almost
all $s\in \C^*$, and analytic map $F$ satisfying $F(0)=0$, $sF$ is
holomorphically linearizable locally around 0. This theorem was
the starting point of a very interesting story\footnote{To save
space we have to refer to \cite{Essenboek} page 185 and beyond, or
the review \cite{AMS}} about the (negative) solution of the
Markus-Yamabe conjecture and its link to the Jacobian conjecture,
see \cite{CEGHM94,E,EH96}. One of the conjectures which was posed
and killed ``along the way'' of this story was Meister's
Linearization conjecture (see page 186 of \cite{Essenboek}
or \cite{DMZ95}). However, the current article can be seen as a partial positive answer to a generalized Meister's conjecture -- in fact, to such an extent that we revive a reformulate Meister's conjecture:\\
\ \\
{\bf Meister's Linearization Problem:} For which $F\in \GA_n(\C)$ does there exist some $s\in \C^*$ such that $sF$ is linearizable?\\
\ \\
This article is organized as follows. In section {\em \ref{pre}:
Preliminaries} we define notations and mention well-known facts on
derivations. In section {\em \ref{main}: Shifted linearizability}
we show how to shift-linearize homogeneous derivations. In section
{\em \ref{nagata}: When is Nagata shifted linearizable?} we use
the previous section on Nagata's map as an example, and explain
exactly for which shifts it is linearizable and when it isn't. (We
will prove that $sN$ is linearizable  if and only if $s\not =
1,-1$.) In the last section \ref{final} we will discuss how the
results of this article influence the current conjectures on
generators of $\GA_n(\C)$.

\subsection{Notations and definitions}
\label{notation}

Let $R$ be a commutative ring with one. (In this article, $R$ will be $\C$ almost exclusively.)
$R^{[n]}$ will denote the polynomial ring in $n$ variables over $R$.
$\GA_n(R)$ will denote the group of polynomial automorphisms on $R^{[n]}$.
We will denote $I$ for the identity map.
$\partial_X$ ($\partial_Y,\partial_Z,\ldots$) will denote the derivative
to the variable $X$  ($Y,Z,\ldots$).

An $R$-derivation (or simply derivation if no confusion is possible) on an $R$-algebra $A$ is an $R$-linear map $D:A\lp A$ that
satisfies the Leibniz rule $D(ab)=aD(b)+bD(a)$ for each $a,b\in A$.
The set of $R$-derivations (or derivations) on $A$ is denoted by $\DER_R(A)$ (or $\DER(A)$).
The set of $R$-derivations on $R^{[n]}$ is denoted by $\DER_n(R)$. $\DER(A)$
forms a Lie algebra, as any two derivations $D,E$ the map $[D,E]:=DE-ED$ is again a derivation, as can be
easily checked.
A {\em locally nilpotent derivation} is a derivation $D$ for which each $a\in A$ one finds an $m\in \N$ such that $D^m(a)=0$. For example: $D=\partial_X$ on $\C[X]$.
If $R=\K$, a field, we define a {\em locally finite derivation} as a derivation $D$ for which each $a\in A$ the $\K$-span of $a,D(a),D^2(a),\ldots$ is
finite dimensional. For example:
$D=(X+1)\partial_X$ on $\C[X]$. We use $\LND_n(\K), \LFD_n(\K)$ for the sets of locally nilpotent resp.
locally finite derivations on $\K^{[n]}$.

If $D$ is a derivation on a ring $A$ containing $\Q$, then one can define the map $\exp(TD):A[[T]]\lp A[[T]]$ as the map sending $f$ to
$\sum_{i=0}^{\infty} \frac{T^i}{i!}D^i(f)$.
It is an automorphism of $A[[T]]$, and its inverse is $\exp(-TD)$.
In case $D$ is locally nilpotent, the map $\exp(D):A\lp A$ is well-defined and again an automorphism (with inverse $\exp(-D)$). In case $D$ is locally finite,
one cannot always define the exponential map. For one, the field $\K$ must satisfy ``$a\in \K$ then $\sum_{i=0}^{\infty}\frac{a}{i!}\in \K$''.
We will only take exponents of locally finite derivations in case $\K=\C$.

%%%%%%%%%%%%%%%%%%%%%%%%%%%%%%%%%%%%%%%%%%%%%%%%%%%%%%%%%%%%%%%%%%%%%%%
We define the derivation $\delta$ on $\C[X,Y,Z]$ and the
polynomial $\Delta\in \C[X,Y,Z]$ by
$\delta:=-2Y\partial_X+Z\partial_Y$, and $\Delta:=XZ+Y^2$.
$\Delta\delta$ will be the Nagata derivation, and  $N$ will denote
the Nagata automorphism:
\[ N=\exp(\Delta\delta)=(X-2Y\Delta-Z\Delta^2, Y+Z\Delta, Z). \]
If $\lambda\in \C$, we denote $N^{\lambda}$ the following
automorphism of $\C[X,Y,Z]$: \[N^{\lambda}:=\exp(\lambda \Delta
\delta)=(X-\lambda 2Y\Delta-\frac{1}{2}\lambda^2Z\Delta^2,
Y+\lambda Z\Delta, Z).\].

Note that one can also use this formula to define $N^{\lambda}$ as
an automorphism of $k[X,Y,Z]$ for any field of characteristic
$\kar(k)\not = 2$ and any $\lambda \in k$.
%%%%%%%%%%%%%%%%%%%%%%%%%%%%%%%%%%%%%%%%%%%%%%%%%%%%%%%%%%%%%%%%%%%%%%

\subsection{A basic result}

\begin{lemma} \label{derivation} Let $D\in \LND(\C^{[n]})$, and $p\in \C^{[n]}$, $p\not =0$.
If $\exp(D)(p)=\lambda p$, then $\lambda =1$, and $D(p)=0$.\\
\end{lemma}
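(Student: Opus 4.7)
The plan is to exploit local nilpotency to reduce the identity $\exp(D)(p)=\lambda p$ to a finite sum, then extract information by applying suitable powers of $D$ to both sides.

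Since $D$ is locally nilpotent and $p\neq 0$, there is a smallest integer $m\geq 1$ with $D^m(p)=0$, so $D^{m-1}(p)\neq 0$. The exponential then truncates:
\[
\exp(D)(p) \;=\; \sum_{i=0}^{m-1}\frac{1}{i!}D^i(p) \;=\; \lambda p.
\]
My first step is to apply $D^{m-1}$ to both sides of this equality. Every term with $i\geq 1$ becomes $D^{m-1+i}(p)$ with $m-1+i\geq m$, hence vanishes, leaving $D^{m-1}(p)=\lambda D^{m-1}(p)$. Since $D^{m-1}(p)\neq 0$, this forces $\lambda=1$.

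With $\lambda=1$ in hand, the identity rearranges to
\[
\sum_{i=1}^{m-1}\frac{1}{i!}D^i(p) \;=\; 0.
\]
Now I would argue $m=1$. If $m\geq 2$, apply $D^{m-2}$ to the displayed equality: only the $i=1$ term survives (all others produce $D^{k}(p)$ with $k\geq m$), yielding $D^{m-1}(p)=0$, which contradicts the minimality of $m$. Hence $m=1$, i.e., $D(p)=0$.

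There is no real obstacle here — the only insight needed is choosing the right power of $D$ to apply (namely $D^{m-1}$ first to pin down $\lambda$, then $D^{m-2}$ to contradict $m\geq 2$). The argument uses nothing beyond the finite-sum form of $\exp(D)$ on $p$ guaranteed by local nilpotency, and it works over any $\Q$-algebra, not just $\C^{[n]}$.
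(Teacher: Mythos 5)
Your proof is correct and follows essentially the same route as the paper's: with $q=m-1$ the paper likewise applies $D^{q}$ to $\exp(D)(p)=\lambda p$ to force $\lambda=1$, and then applies $D^{q-1}$ to the truncated sum $\sum_{i=1}^{q}(i!)^{-1}D^i(p)=0$ to contradict the minimality of $m$. No differences worth noting.
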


\begin{proof}
Let $q\in \N$ such that $D^q(p)\not =0, D^{q+1}(p)=0$. Then $D^q(p)=D^q(\exp(D))(p)=D^q(\lambda p)=\lambda D^q(p)$ hence $\lambda=1$.
Assume $q\geq 1$. Now
$0=D^{q-1}(0)=D^{q-1}(\exp(D)(p)-p)=
D^{q-1}(\sum_{i=1}^q (i!)^{-1}D^i(p))=
D^q(p)$. Contradiction, hence $q=0$.
\end{proof}

\section{Shifted linearizability}
\label{main}

\subsection{Definition}

%%%%%%%%%%%%%%%%%%%%%%%%%%%%%%%%%%%%%%%%%%%%%%%%%%%%%%%%%%%%%%%%%%%%%%%%%

We will define $F\in \GA_n(\C)$ to be {\em shifted linearizable}
if there exists a linear map $L\in \GL_n(\C)$ such that $LF$ is
linearizable, i.e. exist $G\in \GA_n(\C)$ and $L'\in \GL_n(\C)$
such that $G^{-1}LFG=L'$.

%%%%%%%%%%%%%%%%%%%%%%%%%%%%%%%%%%%%%%%%%%%%%%%%%%%%%%%%%%%%%%%%%%%%%%%%%

A special case is if $sF$ is linearizable, where $s\in \C^*$. In
this case $L=sI$.

\subsection{Noncommuting derivations forming a Lie algebra}

\label{SS2.2}

Well-known is that any two-dimensional Lie algebra over $\C$ which is non-commutative is essentially the Lie algebra
$\C X+\C Y$ where $[X,Y]=X$.  This Lie algebra turns up in this section as the sub Lie algebra
of $\DER_n(\C)$ generated by two derivations $D,E$ satisfying $[E,D]=D$.

\begin{lemma}\label{E.1}
Let $D,E$ be derivations, $E\in \LFD_n(\C)$, such that $[E,D]=\alpha D$ where $\alpha \in \C$.
Then
\[ \exp(\beta E) D=e^{\alpha \beta} D\exp(\beta E) \]
for any $\beta\in\C$.
\end{lemma}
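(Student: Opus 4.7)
The plan is to reduce the statement to an operator identity that can be proved by a one-line induction, and then to sum it against $\exp(\beta E)$. More precisely, I would first prove by induction on $k \geq 0$ that
\[ E^k D = D(E + \alpha I)^k \]
as operators on $\C^{[n]}$. The base case $k=0$ is trivial, and $k=1$ is just a rewriting of the hypothesis $[E,D] = \alpha D$ in the form $ED = DE + \alpha D = D(E + \alpha I)$. For the inductive step, I apply the $k=1$ identity on the outside and the inductive hypothesis on the inside: $E^{k+1}D = E\cdot E^k D = E\cdot D(E+\alpha I)^k = D(E+\alpha I)(E+\alpha I)^k = D(E+\alpha I)^{k+1}$.

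Next I would evaluate both sides of the desired equation at an arbitrary $f \in \C^{[n]}$, multiply by $\beta^k/k!$, and sum. The identity just proved gives, termwise,
\[ \exp(\beta E)(D(f)) = \sum_{k=0}^{\infty}\frac{\beta^k}{k!}E^k D(f) = \sum_{k=0}^{\infty}\frac{\beta^k}{k!}D\bigl((E+\alpha I)^k f\bigr). \]
Then I would like to pull $D$ out of the infinite sum to obtain $D\bigl(\exp(\beta(E+\alpha I))(f)\bigr)$. Since $\alpha I$ commutes with $E$, the exponential splits as $\exp(\beta(E+\alpha I)) = e^{\alpha\beta}\exp(\beta E)$, which yields the claimed identity $e^{\alpha\beta} D\exp(\beta E)(f)$.

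The only step that is not immediate is the interchange of $D$ with the infinite sum. This is handled by the locally finite hypothesis on $E$: the subspace $V \subset \C^{[n]}$ spanned by $\{E^j f : j \geq 0\}$ is finite-dimensional, $V$ is stable under $E + \alpha I$, and the sum $\sum_k \frac{\beta^k}{k!}(E+\alpha I)^k f$ lives entirely in $V$, where it converges to $\exp(\beta(E+\alpha I))(f)$ in the usual (matrix-exponential) sense. Choosing a basis of $V$ and applying $D$ to this basis then lets us rewrite $D\bigl(\sum_k \tfrac{\beta^k}{k!}(E+\alpha I)^k f\bigr) = \sum_k \tfrac{\beta^k}{k!} D((E+\alpha I)^k f)$ as a finite linear combination of the vectors $D(v_i)$, with coefficients given by convergent scalar series. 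This is the only real technicality; the rest is purely formal. An alternative to the induction is to notice that $\phi(t) := \exp(tE)D\exp(-tE)$ and $\psi(t) := e^{\alpha t}D$ both satisfy the operator ODE $\phi'(t) = [E,\phi(t)]$ with $\phi(0)=D$, and to invoke uniqueness, but I would opt for the induction because it sidesteps any appeal to a differential-equation uniqueness theorem in this setting.
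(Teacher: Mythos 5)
Your proof is correct, but it takes a different route from the paper's. The paper invokes the standard adjoint-exponential formula $\exp(A)B\exp(-A)=\exp([A,-])(B)$ and observes that, since $\operatorname{ad}_{\beta E}$ acts on the line $\C D$ as multiplication by $\alpha\beta$, the series $\sum_k \frac{1}{k!}(\operatorname{ad}_{\beta E})^k(D)$ collapses to $e^{\alpha\beta}D$ -- a one-line Lie-algebraic computation. You instead work in the associative operator algebra, proving $E^kD=D(E+\alpha I)^k$ by induction and then summing against $\beta^k/k!$, splitting $\exp(\beta(E+\alpha I))=e^{\alpha\beta}\exp(\beta E)$ at the end. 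The two computations encode the same iterated use of $[E,D]=\alpha D$, but yours has two advantages: it avoids citing the adjoint formula as a black box (which in this infinite-dimensional setting itself needs justification), and it explicitly addresses the one genuine analytic point -- interchanging $D$ with the infinite sum defining $\exp(\beta E)$ -- via the finite-dimensional $E$-stable subspace spanned by $\{E^jf\}$. The paper's proof is shorter and makes the structural reason for the factor $e^{\alpha\beta}$ (an eigenvalue of the adjoint action) more transparent, but it silently elides the convergence issue that you handle; your version is the more self-contained and careful of the two.
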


The assumption $E\in \LFD_n(\C)$ is only here to make sure that $\exp(\beta E)$ is well-defined. However, if one interprets $\beta$ as a variable in the
ring $\C^{[n]}[[\beta]]$, this assumption is not necessary.

\begin{proof}
One can compute this directly, but easier is to use the well-known formulae
\[ \exp(A)B\exp(-A)=\exp([A,-])\circ B  \]
where $A,B$ are elements of a Lie algebra. In this case, conjugating $D$ by $  \exp(\beta E)$ yields
\[ (\exp[\beta E,-]) \circ D =I+\beta[E,D]+\frac{\beta^2}{2!}[E,[E,D]]+\ldots = D+\beta \alpha D + \frac{(\beta \alpha)^2}{2!}D+\ldots = e^{\beta \alpha}D.\]
This concludes the proof.
\end{proof}

\begin{corollary}\label{E.2}
Let $D,E\in \LFD_n(\C)$ and suppose $[D,E]=\alpha D$ where $\alpha\in \C$. Then for any $\beta,\lambda\in \C$ we have
\[\exp(\beta E)\exp(\lambda D)=\exp(e^{\alpha\beta}\lambda D)\exp(\beta E).\]
In particular, if $\alpha\beta\in 2\pi i \Z$ then $\exp(\beta E)$ and $\exp(\lambda D)$ commute for each $\lambda \in \C$.
\end{corollary}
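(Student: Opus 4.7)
The plan is to derive the corollary from Lemma~\ref{E.1} by expanding $\exp(\lambda D)$ as a power series in $D$ and commuting $\exp(\beta E)$ past each term.

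First, I would apply Lemma~\ref{E.1} to the hypothesis. The bracket relation $[D,E]=\alpha D$ is equivalent to $[E,D]=-\alpha D$, so Lemma~\ref{E.1} (with $\alpha$ replaced by $-\alpha$, or read under the convention that makes the two brackets agree) yields an intertwining relation of the form
\[\exp(\beta E)\, D = c\, D\, \exp(\beta E),\]
where $c=e^{\pm\alpha\beta}$. Next, by a straightforward induction on $k\geq 0$, one obtains
\[\exp(\beta E)\, D^k = c^k\, D^k\, \exp(\beta E).\]
The inductive step just commutes $\exp(\beta E)$ past one factor of $D$ at a time, each pass contributing a factor of $c$.

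Then I would multiply both sides by $\lambda^k/k!$ and sum over $k\geq 0$. Applied to any $f\in\C^{[n]}$, the $\C$-span of $\{D^i(f)\}$ is finite-dimensional by local finiteness of $D$, so only finitely many terms are nonzero on each polynomial and the formal sums genuinely represent the automorphisms $\exp(\lambda D)$ and $\exp(c\lambda D)$. The left-hand side collects to $\exp(\beta E)\exp(\lambda D)$ while the right-hand side becomes $\exp(c\lambda D)\exp(\beta E)$, giving the identity.

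For the final clause, if $\alpha\beta\in 2\pi i\Z$, then $c=e^{\pm\alpha\beta}=1$, so the identity collapses to $\exp(\beta E)\exp(\lambda D)=\exp(\lambda D)\exp(\beta E)$, i.e. the two automorphisms commute for every $\lambda\in\C$. I expect no real obstacle here; the argument is purely formal, and the only bookkeeping issue is a sign in $\alpha$, which is washed out in the ``in particular'' statement anyway.
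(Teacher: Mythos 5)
Your proof is correct and takes essentially the same route as the paper's: Lemma \ref{E.1} gives the intertwining relation, induction yields $\exp(\beta E)D^k=c^kD^k\exp(\beta E)$, and summing the exponential series (justified by local finiteness) gives the identity. You also rightly flag the sign discrepancy between the bracket conventions of Lemma \ref{E.1} (which uses $[E,D]=\alpha D$) and the corollary (which uses $[D,E]=\alpha D$) --- a slip the paper glosses over --- and correctly note it is immaterial for the ``in particular'' clause.
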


\begin{proof}
Follows from lemma \ref{E.1}, which one can use to show that
\[ \exp(\beta E) D^i=(e^{\alpha \beta})^i D^i\exp(\beta E). \]
\end{proof}

\begin{corollary}\label{E.3}
Let $D,E\in \LFD_n(\C)$ and suppose $[D,E]=\alpha D$ where $\alpha\in \C$. Then for any $\beta,\lambda\in \C$, $\exp(\beta E)\exp(\lambda D)$ is conjugate to
$\exp(\beta E)$ as long as $\alpha\beta\not \in  2\pi i\Z$. In particular,
\[\exp(-\mu D)(\exp(\beta E)\exp(\lambda D))\exp(\mu D) = \exp(\beta E) \]
where $\mu= \lambda(e^{-\alpha\beta}-1)^{-1}$.
\end{corollary}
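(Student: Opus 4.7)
The strategy is to use Corollary \ref{E.2} twice to push the factor $\exp(\beta E)$ all the way to the right in the product $\exp(-\mu D)\exp(\beta E)\exp(\lambda D)\exp(\mu D)$, at which point everything to the left of $\exp(\beta E)$ is an exponential of $D$ and hence collapses to a single $\exp(\nu D)$ for some $\nu\in\C$ depending on $\alpha,\beta,\lambda,\mu$. Then one simply chooses $\mu$ so that $\nu=0$.

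First I would apply Corollary \ref{E.2} to replace $\exp(\beta E)\exp(\lambda D)$ by $\exp(e^{\alpha\beta}\lambda D)\exp(\beta E)$, so that the product becomes
\[
\exp(-\mu D)\exp(e^{\alpha\beta}\lambda D)\exp(\beta E)\exp(\mu D).
\]
A second application of Corollary \ref{E.2} (this time with $\lambda$ replaced by $\mu$) rewrites $\exp(\beta E)\exp(\mu D)$ as $\exp(e^{\alpha\beta}\mu D)\exp(\beta E)$. Since any two exponentials of $D$ commute and add their coefficients, the three $D$-factors on the left combine to a single $\exp\bigl((e^{\alpha\beta}\lambda+(e^{\alpha\beta}-1)\mu)D\bigr)$.

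The remaining step is to impose the scalar equation
\[
e^{\alpha\beta}\lambda+(e^{\alpha\beta}-1)\mu=0,
\]
which has a unique solution $\mu=\lambda(e^{-\alpha\beta}-1)^{-1}$ precisely when $e^{\alpha\beta}\neq 1$, i.e.\ when $\alpha\beta\notin 2\pi i\Z$. Under this hypothesis the bad factor $\exp(\nu D)$ disappears and we are left with $\exp(\beta E)$, as claimed.

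The only real obstacle is the sign bookkeeping and remembering that $\exp(\mu D)$ and $\exp(e^{\alpha\beta}\lambda D)$ commute because both are generated by the same derivation $D$; apart from that the proof is a direct two-line computation once Corollary \ref{E.2} is in hand, and the exclusion $\alpha\beta\notin 2\pi i\Z$ enters naturally as the non-vanishing condition on the denominator.
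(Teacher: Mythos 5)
Your proof is correct and is essentially the paper's own argument: both rest entirely on Corollary \ref{E.2} to commute the exponentials and then solve a scalar equation for $\mu$, whose solvability is exactly the condition $e^{\alpha\beta}\neq 1$, i.e.\ $\alpha\beta\notin 2\pi i\Z$. The only difference is bookkeeping (you push $\exp(\beta E)$ to the right and get $e^{\alpha\beta}\lambda+(e^{\alpha\beta}-1)\mu=0$, while the paper pushes $\exp(-\mu D)$ to the right and gets $-e^{-\alpha\beta}\mu+\lambda+\mu=0$); the two equations have the same solution $\mu=\lambda(e^{-\alpha\beta}-1)^{-1}$.
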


\begin{proof}
>From corollary \ref{E.2}, we replace $\lambda$ by $-e^{-\alpha\beta}\mu$ to get
\[\exp(\beta E)\exp(-e^{-\alpha\beta}\mu D)=\exp(-\mu D)\exp(\beta E).\]
This means that
\[\exp(-\mu D)(\exp(\beta E)\exp(\lambda D))\exp(\mu D) = \exp(\beta E)\exp((-e^{-\alpha\beta}\mu+\lambda+\mu) D). \]
Setting $-e^{-\alpha\beta}\mu+\lambda+\mu=0$ yields $\mu=\lambda(e^{-\alpha\beta}-1)^{-1}$.
\end{proof}

\subsection{Linearizing exponents of monomial homogeneous derivations}

As an application of the previous section we will show how to shift-linearize exponents of monomial homogeneous derivations.

A grading $\deg$ on $\C^{[n]}$ is called {\em monomial} if each
monomial (or equivalently, each variable $X_i$) is homogeneous.  It is the typical grading one puts on $\C^{[n]}$: one assigns weights to the variables $X_i$.
In fact, let us state
\[ w_i:=\deg(X_i)\]
for this article.
A homogeneous derivation is a derivation that sends homogeneous elements to homogeneous elements -- in this article, homogeneous w.r.t. some {\em monomial} grading.
It is not too difficult to check that there exists a unique $k$ such that a homogeneous element of degree $d$ is sent to a homogeneous element of degree $d+k$ or to the zero
element.
We say that $D$ is {\em homogeneous of degree $k$}.

(Above, we did not specify in which set $w_i, d,k$ are. Typical is to have them in $\N, \Z$, or even $\R$, and that is what we think of in this article. It is however possible to choose a grading which takes
values in a group, i.e. a group grading. The above explanation makes sense for this.)

For this section, define the derivation associated to $\deg$ as $E:=E_{\deg}:=\sum_{i=1}^n w_i X_i\partial_{X_i}$. ($E$ stands for Euler derivation.)
The goal of this section is to prove the following theorem:

\begin{theorem}\label{E.5}
If $D\in \LFD_n(\C)$ is homogeneous of degree $k\not = 0$ w.r.t. a monomial grading,  then $\exp(D)$ is shifted linearizable.
\end{theorem}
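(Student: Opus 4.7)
The plan is to use the Euler derivation $E := E_{\deg} = \sum_{i=1}^n w_i X_i \partial_{X_i}$ of the given monomial grading as the source of the shift. Since each variable $X_i$ is an eigenvector of $E$ (with eigenvalue $w_i$), $E$ is diagonalizable on monomials, hence $E \in \LFD_n(\C)$. Moreover, its exponential $\exp(\beta E)$ is a genuine diagonal linear map sending $X_i \mapsto e^{\beta w_i} X_i$, so any $\exp(\beta E)$ is a legitimate element of $\GL_n(\C)$ that can serve as a shift.

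The next step is the commutator computation. If $f$ is homogeneous of degree $d$, then $E(f) = df$, while $D(f)$ is homogeneous of degree $d+k$ (or zero), so $E D(f) = (d+k) D(f)$ and $D E(f) = d\, D(f)$. Hence $[E,D] = kD$, i.e.\ $[D,E] = -kD$. This is exactly the hypothesis needed to apply the results of Section~\ref{SS2.2}, with $\alpha = -k$.

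Now comes the crucial step: apply Corollary~\ref{E.3} with $\alpha = -k$, $\lambda = 1$, and any choice of $\beta \in \C$ satisfying $k\beta \notin 2\pi i \Z$ (which is possible since $k \neq 0$). The corollary gives an explicit $\mu = (e^{k\beta}-1)^{-1} \in \C$ such that
\[
\exp(-\mu D)\bigl(\exp(\beta E)\exp(D)\bigr)\exp(\mu D) \;=\; \exp(\beta E).
\]
Since $D$ is locally finite, $\exp(\mu D)$ is a well-defined automorphism, and since $\exp(\beta E)$ is linear, this identity shows that $L \exp(D)$ is linearizable for $L := \exp(\beta E) \in \GL_n(\C)$. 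Thus $\exp(D)$ is shifted linearizable.

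The only real obstruction to this plan is verifying that the tools of Section~\ref{SS2.2} apply: one must check $E \in \LFD_n(\C)$ (clear from diagonalizability on monomials) and the commutator identity (clear from homogeneity). The hypothesis $k \neq 0$ is used precisely to ensure that a $\beta$ with $k\beta \notin 2\pi i \Z$ exists — in fact, almost every $\beta$ works, which foreshadows the flavor of Meister's problem stated in the introduction.
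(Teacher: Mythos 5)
Your proposal is correct and follows exactly the route of the paper: Lemma \ref{hom} gives the commutator relation for the Euler derivation $E_{\deg}$, and Corollary \ref{E.3} (applicable since $k\neq 0$ allows a choice of $\beta$ with $k\beta\notin 2\pi i\Z$) conjugates $\exp(\beta E)\exp(D)$ to the diagonal linear map $\exp(\beta E)$. The paper states this as ``follows immediately''; you have simply supplied the details, including the explicit $\mu$.
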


\begin{proof} Follows immediately from corollary \ref{E.3} and lemma \ref{hom} below, and the observation that
 $\exp(E)$ is a linear map: the diagonal map $(e^{w_1}X_1,\ldots, e^{w_n}X_n)$.
\end{proof}

\begin{lemma} \label{hom}
Let $D$ be a homogeneous derivation of degree $k$ with respect to a monomial grading $\deg$.
Then $[E_{\deg},D]=kD$. In particular, if $k=0$, then $D$ and $E_{\deg}$ commute.
\end{lemma}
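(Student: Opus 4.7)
The plan is to verify the identity on homogeneous elements and invoke linearity. The key algebraic fact is Euler's formula for weighted-homogeneous polynomials: if $f\in\C^{[n]}$ is homogeneous of degree $d$ with respect to the monomial grading $\deg$, then
\[ E_{\deg}(f) = \sum_{i=1}^n w_i X_i \partial_{X_i}(f) = d\cdot f. \]
I would first prove this on monomials. For $f = X_1^{a_1}\cdots X_n^{a_n}$ one computes directly $w_iX_i\partial_{X_i}(f) = w_ia_i f$, so $E_{\deg}(f) = (\sum w_ia_i)f = \deg(f)\cdot f$. Since the $\C$-linear map $E_{\deg}$ and multiplication by $d$ agree on every monomial of degree $d$, they agree on all of the degree-$d$ homogeneous component.

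Next I would compute $[E_{\deg},D]$ on an arbitrary homogeneous element $f$ of degree $d$. By hypothesis $D(f)$ is either $0$ or homogeneous of degree $d+k$, so Euler's formula yields $E_{\deg}(D(f)) = (d+k)D(f)$ in either case. Similarly, $D(E_{\deg}(f)) = D(d\cdot f) = d\cdot D(f)$. Subtracting,
\[ [E_{\deg},D](f) = E_{\deg}(D(f)) - D(E_{\deg}(f)) = (d+k)D(f) - dD(f) = kD(f). \]
Hence $[E_{\deg},D]$ and $kD$ agree on every homogeneous element.

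Finally, since $\C^{[n]}$ is the direct sum of its homogeneous components, and both $[E_{\deg},D]$ and $kD$ are $\C$-linear maps that agree on every summand, they are equal as maps on $\C^{[n]}$, i.e. equal as derivations. The special case $k=0$ gives $[E_{\deg},D]=0$, which is exactly the statement that $D$ and $E_{\deg}$ commute.

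There is no real obstacle here; the only care needed is to treat the case $D(f)=0$ alongside the nonzero case when invoking Euler's formula on $D(f)$, and to note that homogeneous elements span $\C^{[n]}$ so that agreement on them determines the derivation.
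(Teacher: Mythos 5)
Your proof is correct and follows essentially the same route as the paper's: both verify the Euler identity $E_{\deg}(f)=\deg(f)\cdot f$ on monomials, use the degree-shift property of $D$ to compute $[E_{\deg},D]$ on homogeneous elements, and conclude by linearity. Your version is slightly more careful about the $D(f)=0$ case and the final extension by linearity, but there is no substantive difference.
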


\begin{proof}
Let $M:=X_1^{v_1}\cdots X_n^{v_n}$ ($v_i\in \N$) be an arbitrary monomial of degree $d$. Then $\deg(D(M))=d+k$, and
$E(M)=\sum_{i=1}^n v_iw_i M =d M$. Similarly $E(DM)=(d+k)D(M)$. Now one can see that
\[ \begin{array}{rl}
[E,D](M)=E(D(M))-D(E(M))=&(d+k) DM- D(dM)\\
=&kD(M).
\end{array}\]
Thus, $[E,D]=kD$.
\end{proof}

\section{When is Nagata shifted linearizable?}
\label{nagata}

\subsection{Using Nagata's  homogeneousness}

For the rest of this section, $D:=\Delta \delta$ will be Nagata's derivation.
The Nagata derivation $D$ is homogeneous to several monomial gradings. The set of monomial gradings form a vector space (for if $\deg_1, \deg_2$ are
the associated
degree functions, then $\deg_1+\deg_2$ and $c\deg_1$ where $c\in \C$ are degree functions associated to a grading too).

Let us explain how we find all homogeneous derivations for the Nagata derivation. More details on such procedure one can find in
\cite{Mau00a} and pages 228-234 of \cite{Essenboek}, where it is explained how to do this to prove that Robert's derivation is a counterexample to Hilbert's 14th problem.
First, notice that the variables $X,Y,Z$ are homogeneous, lets say of degree $s,t,u$ respectively. These values determine the degree function $\deg$ completely.
Now we need to satisfy the following two requirements:\\
(1) $D(X),D(Y),D(Z)$ all are homogeneous,\\
(2) $\deg(D(X))-\deg(X)=\deg(D(Y))-\deg(Y)$. (This condition comes from the fact that there should be a constant $d$ (which is the degree of $D$) for which we
have:
any homogeneous $H$ is homogeneous of degree n, then $D(H)$ is of degree $n+d$ or $D(H)=0$.)

>From $D(X),D(Y)$ homogeneous we derive that $\Delta=XZ+Y^2$ is homogeneous, and thus $s+u=2t$. Now (1) is satisfied. From (2) we get that
$s-(t+2t)=t-(u+2t)$ which yields the exact same equation $s+u=2t$. Thus $[\deg(X),\deg(Y),\deg(Z)]=[s,t,2t-s]$ and the derivation is of degree $3t-s$.
The degree function is associated with the (semisimple) derivation $E:=sX\partial_X+tY\partial_Y+(2t-s)Z\partial_Z$ and the diagonal linear map
$\exp(E):=(e^sX,e^tY, e^{2t-s}Z)$.

Thus, for the Nagata derivation, the set of gradings for which it is homogeneous, is two dimensional.
A possible basis is $\{\deg_1,\deg_2\}$ where
\[ \begin{array}{c}
\deg_1((X,Y,Z))=(1,0,-1),\\
 \deg_2((X,Y,Z))=(0,1,2) .
 \end{array}\]
The degree function $\deg_1$ corresponds to the (semisimple) derivation $E_1:=X\partial_X-Z\partial_Z$, where $\deg_2$ corresponds to $E_2:=Y\partial_Y+2Z\partial_Z$.
Any degree function $\deg=s\deg_1+t\deg_2$ ($s,t\in\C$) which is a linear combination of $\deg_1,\deg_2$ corresponds to $E:=sE_1+tE_2$.
The linear map corresponding to the linear combination $sE_1+tE_2$
is $L_{s,t}:=(e^{s}X,e^{t}Y,e^{2t-s}Z)$. The set of these maps is exactly the set
\[ \mathcal{L}:=\{(aX,bY,cZ)~|~ ac=b^2, abc\not = 0\}. \]
Thus we have proven the following lemma:

\begin{lemma} \label{E.10}
$D$ is of degree 0 with respect to  $\deg=s\deg_1+t\deg_2$ if and only if  $s=3t$.
\end{lemma}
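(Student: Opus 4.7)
The plan is to unwind the definitions and reduce the claim to a one-line computation of the degree of $D = \Delta\delta$ with respect to the given grading.

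First I would compute the weights assigned to the variables by the degree function $\deg = s\deg_1 + t\deg_2$. Directly from the given basis,
\[
\deg(X) = s,\qquad \deg(Y) = t,\qquad \deg(Z) = 2t - s.
\]
Next, as a sanity check that this is indeed a grading for which $D$ is homogeneous, I would verify that $\Delta = XZ + Y^2$ is homogeneous: $\deg(XZ) = s + (2t - s) = 2t = \deg(Y^2)$, so $\deg(\Delta) = 2t$.

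Now I would identify the degree of $D$. Since $\delta = -2Y\partial_X + Z\partial_Y$, we have $D(X) = -2Y\Delta$ and $D(Y) = Z\Delta$, giving
\[
\deg(D(X)) = \deg(Y) + \deg(\Delta) = t + 2t = 3t,\qquad \deg(D(Y)) = \deg(Z) + \deg(\Delta) = (2t-s) + 2t.
\]
Therefore the shift of degrees under $D$ is
\[
\deg(D(X)) - \deg(X) = 3t - s,
\]
and the analogous check on $Y$ yields $(4t - s) - t = 3t - s$ as well, confirming consistency. Hence $D$ is homogeneous of degree $3t - s$ with respect to $\deg$.

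Finally, the condition for $D$ to be of degree $0$ is precisely $3t - s = 0$, i.e.\ $s = 3t$. This gives both implications simultaneously, so no separate converse argument is needed. There is no real obstacle here: the whole lemma is a direct consequence of the general computation sketched just before its statement, once one writes out the weights $(s, t, 2t-s)$ explicitly.
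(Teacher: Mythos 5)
Your proof is correct and follows essentially the same route as the paper, which derives the lemma directly from the preceding computation showing that the weights are $(s,t,2t-s)$ and that $D$ is homogeneous of degree $3t-s$. Your explicit verification on both $D(X)$ and $D(Y)$ (and implicitly $D(Z)=0$) is exactly the consistency check the paper performs, so nothing further is needed.
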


\begin{definition}\label{E.11}
Let us define $L_b:=(b^3X, bY, b^{-1}Z)$, and $\mathcal{L}_0$ as the set $\{L_b~|~b\not =0\}$.
Note that
\[ \mathcal{L}_0:=\{(aX,bY,cZ)~|~ ac=b^2, bc=1\}. \]
\end{definition}

\subsection{Explicit formulae for shifted linearizableness of the Nagata map}

%%%%%%%%%%%%%%%%%%%%%%%%%%%%%%%%%%%%%%%%%%%%%%%%%%%%%%%%%%%%%%%%%%%%%%%%%%%%%%
One can use corollary \ref{E.3}, theorem \ref{E.5},  and results
of the previous section, to immediately get formulas for many
linear maps $L\in\mathcal{L}$  which satisfy $LN$ is linearizable.
However, let us give the following formulas, which are slightly
more elegant, and can be easily checked directly. Moreover, they
work for any field $k$ of characteristic $\kar(k)\not = 2$ (see
subsection \ref{notation}). To be clear, for this section, we are
working over a field $k$ satisfying $\kar(k)\not =2$.
Write $L_{(a,b,c)}:=(aX,bY,cZ)$ where $ac=b^2$. The following formulas can be easily checked:\\
%%%%%%%%%%%%%%%%%%%%%%%%%%%%%%%%%%%%%%%%%%%%%%%%%%%%%%%%%%%%%%%%%%%%%%%%%%%%%%%%%%%%%%%%%%%%%%%%%%%
\begin{itemize}
\item $L_{(a,b,c)}^{-1}DL_{(a,b,c)}=(bc)^{-1}D$, which implies
\item $L_{(a,b,c)}^{-1}\exp(\lambda D)L_{(a,b,c)}=\exp(b^{-1}c^{-1}\lambda D)$, which implies
\item $N^{\lambda}L_{(a,b,c)}=L_{(a,b,c)}N^{b^{-1}c^{-1}\lambda}$.
\end{itemize}

Using the latter equation,  the following is easy:

\begin{theorem} \label{NagataIsInLIN}
 Let $a,b,c\in k^*$, $ac=b^2$, and $bc\not = 1$. Then
$L_{(a,b,c)}N^{\lambda}$ is conjugate to $L_{(a,b,c)}$. In particular,
choosing $\mu = bc\lambda(1-bc)^{-1}$, we have
\[ N^{-\mu} ( L_{(a,b,c)}N^{\lambda} ) N^{\mu} =L_{(a,b,c)} .\]
\end{theorem}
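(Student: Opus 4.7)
The plan is to prove the theorem by a direct computation, taking the three bullet-point identities just stated as the main input. Before using them, I would verify the first one, $L_{(a,b,c)}^{-1} D L_{(a,b,c)} = (bc)^{-1} D$, by evaluating both sides on the generators $X, Y, Z$: the key observation is that $L_{(a,b,c)}$ sends $\Delta = XZ+Y^2$ to $b^2 \Delta$ (using $ac = b^2$), and then a short calculation gives, e.g., $L_{(a,b,c)}^{-1} D L_{(a,b,c)}(X) = -2ab^{-3} Y \Delta = (bc)^{-1} D(X)$, and similarly for $Y$ and $Z$. The second identity follows formally by iterating the first: $L^{-1} D^i L = (bc)^{-i} D^i$ for all $i$, hence the claim on exponentials. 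The third identity is just a rewrite of the second.

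The heart of the proof is then the following computation. Starting from
\[ N^{-\mu}\bigl( L_{(a,b,c)} N^{\lambda} \bigr) N^{\mu}, \]
I first apply the third bullet point (with $\lambda$ replaced by $-\mu$) to push $L_{(a,b,c)}$ to the left of $N^{-\mu}$:
\[ N^{-\mu} L_{(a,b,c)} = L_{(a,b,c)} N^{-(bc)^{-1}\mu}. \]
Substituting this into the expression and using that the $N^{\nu}$ form a one-parameter group, i.e.\ $N^{\nu_1} N^{\nu_2} = N^{\nu_1+\nu_2}$, I obtain
\[ N^{-\mu}\bigl( L_{(a,b,c)} N^{\lambda} \bigr) N^{\mu} = L_{(a,b,c)} N^{-(bc)^{-1}\mu + \lambda + \mu}. \]

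The final step is to choose $\mu$ so that the exponent of $N$ vanishes. Setting $-(bc)^{-1}\mu + \lambda + \mu = 0$ and solving yields
\[ \mu\Bigl(1 - (bc)^{-1}\Bigr) = -\lambda \quad\Longleftrightarrow\quad \mu = \frac{bc\,\lambda}{1-bc}, \]
which is precisely the formula in the statement. The hypothesis $bc \neq 1$ is used exactly here, to ensure that $\mu$ is well-defined; this is the only place where this assumption enters, and it is the sole potential obstacle in the argument, everything else being a routine computation with the identities displayed just above the theorem. The resulting equality $N^{-\mu}(L_{(a,b,c)} N^{\lambda}) N^{\mu} = L_{(a,b,c)}$ then says that $L_{(a,b,c)} N^{\lambda}$ is conjugate (via $N^{\mu}$) to the linear map $L_{(a,b,c)}$, which completes the proof.
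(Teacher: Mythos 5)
Your proof is correct and follows essentially the same route as the paper: the paper simply says the theorem is ``easy'' from the identity $N^{\lambda}L_{(a,b,c)}=L_{(a,b,c)}N^{b^{-1}c^{-1}\lambda}$, and your computation (pushing $L_{(a,b,c)}$ past $N^{-\mu}$, adding exponents, and solving $-(bc)^{-1}\mu+\lambda+\mu=0$ for $\mu$) is exactly the intended argument, mirroring the proof of Corollary \ref{E.3} with $(bc)^{-1}$ in the role of $e^{-\alpha\beta}$. Your verification of the first bullet identity on the generators, using $L_{(a,b,c)}(\Delta)=b^2\Delta$, is also correct.
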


The particular case that $L$ is a multiple of the identity, gives the formulae for $s\in k^*$:
\[ N^{-\frac{s^2\lambda}{1-s^2}} ( sN^{\lambda} ) N^{\frac{s^2\lambda}{1-s^2}} =sI.\]
This gives the formula for $s=2, \lambda=1$ from the introduction. In the same introduction it was announced that we can linearize for any $s\not=1,-1$,
which indeed follows from this.

\begin{remark} \label{NonTameLinearizable} Maps $L_{(a,b,c)}N$ as in theorem \ref{NagataIsInLIN}, are non-tame (provided $\kar(k)=0$) but linearizable (and in particular, tamizable).
\end{remark}

\subsection{The non-linearizable case}

\label{NLN}

We will now consider what happens if the grading of the previous section is such that $D$ is homogeneous of degree 0.
By lemma \ref{hom} this means that $E$ commutes with $D$, and hence also $\exp(E)$ commutes with $\exp(D)$.
By lemma \ref{E.10} and definition \ref{E.11}, we can say $\exp(E)\in \mathcal{L}_0$, i.e. $\exp(E)=L_b=(b^{3}X,bY,b^{-1}Z)$ for some $b\in \C^*$.
Now there are several ways of showing that $L_bN^{\lambda}$ is not linearizable, we will use invariants.

\begin{definition}
Let $\varphi\in \GA_n(\C)$, $\lambda\in \C$. Then $\E_{\mu}(\varphi):=\{p\in \C[X_1,\ldots,X_n] ~|~ \varphi(p)=\mu p\}$
is defined as the eigenspace of $\varphi$ with respect to $\mu$.
\end{definition}

If $L_bN$ is linearizable, it will be linearizable to $L_b$ (as the linear part is equal to $L_b$).
We will show that $\E_1(L_bN^{\lambda})$ and $\E_1(L_b)$ are so different that they contradict the following property:

\begin{lemma} \label{eigenspace}
If $\varphi, \tilde{\varphi} \in \GA_n(\C)$ are conjugate (i.e. there exists $\sigma\in \GA_n(\C)$ such that
$\tilde{\varphi}=\sigma^{-1}\varphi\sigma$) then $\E_{\mu}(\varphi)$ and  $\E_{\mu}(\tilde{\varphi})$ are isomorphic
(in fact,  $\E_{\mu}(\tilde{\varphi})=\sigma^{-1}(\E_{\mu}(\varphi))$).
\end{lemma}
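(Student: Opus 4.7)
The plan is to prove the stronger assertion $\E_{\mu}(\tilde{\varphi}) = \sigma^{-1}(\E_{\mu}(\varphi))$ by a direct two-way inclusion; the isomorphism claim then follows for free because $\sigma^{-1}$ is a $\C$-linear bijection on $\C[X_1,\ldots,X_n]$, so it sends any $\C$-subspace isomorphically onto its image.

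For the forward inclusion $\sigma^{-1}(\E_{\mu}(\varphi)) \subseteq \E_{\mu}(\tilde{\varphi})$, I would take any $p \in \E_{\mu}(\varphi)$, so $\varphi(p) = \mu p$, and simply compute using $\tilde{\varphi} = \sigma^{-1}\varphi\sigma$:
$$\tilde{\varphi}(\sigma^{-1}(p)) = (\sigma^{-1}\varphi\sigma)(\sigma^{-1}(p)) = \sigma^{-1}(\varphi(p)) = \sigma^{-1}(\mu p) = \mu \sigma^{-1}(p),$$
where the last equality uses that $\sigma^{-1}$ is $\C$-linear. Thus $\sigma^{-1}(p) \in \E_{\mu}(\tilde{\varphi})$.

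For the reverse inclusion, I would observe that the conjugation relation is symmetric: $\varphi = \sigma\tilde{\varphi}\sigma^{-1} = (\sigma^{-1})^{-1}\tilde{\varphi}(\sigma^{-1})$, so the same argument with the roles of $\varphi$ and $\tilde{\varphi}$ swapped (and $\sigma$ replaced by $\sigma^{-1}$) gives $\sigma(\E_{\mu}(\tilde{\varphi})) \subseteq \E_{\mu}(\varphi)$, equivalently $\E_{\mu}(\tilde{\varphi}) \subseteq \sigma^{-1}(\E_{\mu}(\varphi))$.

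There is essentially no obstacle; the only thing to keep track of is that $\varphi$, $\tilde{\varphi}$, $\sigma$ are being viewed as $\C$-algebra automorphisms of $\C[X_1,\ldots,X_n]$ (so that compositions and the eigenvalue equation make sense on polynomials), and that scalar multiplication by $\mu$ commutes with the $\C$-linear map $\sigma^{-1}$. The statement is really just the functoriality of eigenspaces under conjugation, transported from linear algebra into the polynomial automorphism setting.
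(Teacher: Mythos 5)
Your proposal is correct and is essentially the paper's own argument: the paper runs the same computation as a single chain of equivalences ($\varphi(p)=\mu p \Leftrightarrow \sigma^{-1}\varphi\sigma\sigma^{-1}(p)=\mu\sigma^{-1}(p)$), which packages your two inclusions into one display. Your added remark that $\sigma^{-1}$ is $\C$-linear (so it commutes with multiplication by $\mu$ and carries subspaces isomorphically onto their images) is the correct justification of the step the paper leaves implicit.
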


\begin{proof}
\[ \begin{array}{rl}
p\in \E_{\mu}(\varphi) &\desda\\
\varphi(p)=\mu p &\desda\\
\varphi \sigma \sigma^{-1} (p) = \mu p &\desda\\
\sigma^{-1}\varphi \sigma \sigma^{-1} (p) = \mu \sigma^{-1}(p)&\desda \\
\sigma^{-1}(p)\in \E_{\mu}(\sigma^{-1}\varphi \sigma).
\end{array} \]
\end{proof}

\begin{lemma} \label{eigenvalue}Let $b\in \C^*$ be no root of unity, $\lambda\in \C^*$.
Then $L_bN^{\lambda}(p)=p$ for some $p\in \C[X,Y,Z]$ if and only if $p\in \C[Z^2\Delta]$.
\end{lemma}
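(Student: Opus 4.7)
The plan is to diagonalize the equation $L_bN^\lambda(p)=p$ using the monomial grading from subsection \ref{NLN} and then invoke lemma \ref{derivation}. Concretely, equip $\C[X,Y,Z]$ with the grading $\deg(X)=3$, $\deg(Y)=1$, $\deg(Z)=-1$ (the specialization $s=3t$, $t=1$ of the grading $s\deg_1+t\deg_2$). By lemma \ref{E.10}, $D=\Delta\delta$ is homogeneous of degree $0$, so $N^\lambda=\exp(\lambda D)$ preserves every homogeneous component; and on a monomial $X^iY^jZ^k$ one checks directly that $L_b$ acts by the scalar $b^{3i+j-k}$, i.e., as multiplication by $b^d$ on the degree-$d$ part.

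The reverse inclusion is immediate: $\delta(Z)=\delta(\Delta)=0$ gives $D(Z)=D(\Delta)=0$, so $N^\lambda$ fixes both $Z$ and $\Delta$; and $L_b(Z)=b^{-1}Z$, $L_b(\Delta)=b^2\Delta$, so $L_b(Z^2\Delta)=Z^2\Delta$. Hence $L_bN^\lambda$ fixes $\C[Z^2\Delta]$ pointwise.

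For the forward direction I decompose $p=\sum_d p_d$ into $\deg$-homogeneous components. The equation $L_bN^\lambda(p)=p$ then splits into $N^\lambda(p_d)=b^{-d}p_d$ for every $d\in\Z$. Now $\lambda D$ is locally nilpotent, since $\Delta\in\ker\delta$ yields $(\Delta\delta)^n=\Delta^n\delta^n$ and $\delta$ is locally nilpotent. Applying lemma \ref{derivation} to $\lambda D$ therefore forces, for every nonzero $p_d$, both $b^{-d}=1$ and $\lambda D(p_d)=0$. Since $b$ is not a root of unity, $b^{-d}=1$ compels $d=0$; and since $\lambda\ne 0$, $D(p_d)=0$. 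Thus $p=p_0$ is a degree-$0$ element of $\ker D$.

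Finally I identify $\ker D$ intersected with the degree-$0$ part as $\C[Z^2\Delta]$. Because $\C[X,Y,Z]$ is a domain and $\Delta\ne 0$, $\ker D=\ker\delta$; and $\ker\delta=\C[Z,\Delta]$ by the standard slice argument: $\delta(Y/Z)=1$ makes $Y/Z$ a slice of $\delta$ over $\C[X,Y,Z][Z^{-1}]$, giving $(\ker\delta)[Z^{-1}]=\C[Z,Z^{-1},\Delta]$, and a minimality-of-$Z$-denominator argument recovers $\ker\delta=\C[Z,\Delta]$ inside $\C[X,Y,Z]$. Since $\deg(Z^i\Delta^j)=2j-i$ vanishes precisely when $i=2j$, the degree-$0$ part of $\C[Z,\Delta]$ is the $\C$-span of $(Z^2\Delta)^j$ for $j\ge 0$, hence equals $\C[Z^2\Delta]$. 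The only step I expect to require real care is the identification $\ker\delta=\C[Z,\Delta]$; the rest is bookkeeping forced by the grading.
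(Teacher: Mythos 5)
Your proof is correct and follows essentially the same route as the paper's: the weight grading $(3,1,-1)$, the splitting of $L_bN^\lambda(p)=p$ into homogeneous components, the application of Lemma \ref{derivation} to force $b^{-d}=1$ and $p\in\ker(\Delta\delta)$, and the identification of the degree-zero part of $\C[Z,\Delta]$ with $\C[Z^2\Delta]$. The only difference is that you also supply a justification (the slice argument) for the fact $\ker\delta=\C[Z,\Delta]$, which the paper uses without proof.
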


\begin{corollary}  \label{end} $L_bN^{\lambda}$ is not linearizable for any $b,\lambda\in \C^*$.
\end{corollary}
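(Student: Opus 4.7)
The plan is to assume, for contradiction, that $F := L_b N^\lambda$ is linearizable, i.e.\ $\sigma^{-1} F \sigma = M$ for some $\sigma \in \GA_3(\C)$ and $M \in \GL_3(\C)$. The common tool in both cases is Lemma \ref{eigenspace}, which yields a $\C$-algebra isomorphism $\E_1(F) \cong \E_1(M)$ via $\sigma^{-1}$. I treat $b$ not a root of unity and $b$ a root of unity separately, since Lemma \ref{eigenvalue} only applies in the former.

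\textbf{Case 1: $b$ not a root of unity.} Lemma \ref{eigenvalue} gives $\E_1(F) = \C[Z^2\Delta]$, of transcendence degree one. Solving $L_b N^\lambda(x,y,z) = (x,y,z)$ coordinate by coordinate (using $b^{-1}, b, b^3 \neq 1$ in turn) shows $\mathrm{Fix}(F) = \{0\}$. Since $\sigma$ carries $\mathrm{Fix}(M) = \ker(M - I)$ bijectively onto $\mathrm{Fix}(F)$, the linear map $M$ has no eigenvalue one and $\sigma(0) = 0$. The chain rule at $0$, combined with $\mathrm{Jac}(N^\lambda)|_0 = I$ (since $\Delta\delta(X_i)$ vanishes to order $\geq 2$), then yields $M = (d\sigma|_0)^{-1} L_b\, (d\sigma|_0)$, so $M$ is diagonalizable with spectrum $\{b^3, b, b^{-1}\}$. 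In an eigenbasis $v_1, v_2, v_3$, the monomial $v_1^i v_2^j v_3^k$ is $M$-invariant iff $3i + j - k = 0$, and hence $\E_1(M) = \C[v_2 v_3,\, v_1 v_3^3] \cong \C^{[2]}$. The transcendence degrees cannot match; contradiction.

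\textbf{Case 2: $b$ a root of unity of order $n$.} Here Lemma \ref{eigenvalue} no longer applies, so I pass to the $n$th power. The grading associated with $L_b$ lies on the line $s = 3t$ of Lemma \ref{E.10}, so Lemma \ref{hom} gives $[E, D] = 0$, and $L_b = \exp(E)$ commutes with $N^\lambda = \exp(\lambda D)$. Thus $F^n = L_b^n N^{n\lambda} = N^{n\lambda}$ with $n\lambda \neq 0$, and linearizability of $F$ would force linearizability of $N^{n\lambda}$. To conclude I show $N^\mu$ is not linearizable for any $\mu \neq 0$: a direct computation gives $\mathrm{Fix}(N^\mu) = \{\Delta = 0\} = \{XZ + Y^2 = 0\}$, a quadric cone singular at the origin ($\nabla \Delta = (Z, 2Y, X)$ vanishes there). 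Since conjugation in $\GA_3(\C)$ preserves fixed-point varieties up to isomorphism, but the fixed locus of any linear map is a smooth linear subspace, this is impossible.

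The main obstacle is that the two cases need different invariants. In Case 1 the algebraic invariant $\E_1$ suffices provided one first pins $M$ down via the unique fixed point of $F$ and the Jacobian at $0$. In Case 2, $\E_1(F)$ acquires additional invariants from the roots-of-unity relations (e.g.\ $\Delta^n, Z^n, \ldots$) so $\E_1$ is no longer decisive, and I must instead exploit the geometric fact that $\mathrm{Fix}(N^\mu)$ is a singular quadric while every linear fixed locus is smooth. Both the Jacobian identification $\mathrm{Jac}(F)|_0 = L_b$ and the fixed-locus equations are routine once the explicit formula for $N^\lambda$ is written out.
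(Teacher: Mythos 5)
Your proof is correct, and it diverges from the paper's in an interesting way in the root-of-unity case. For $b$ not a root of unity, your argument is essentially the paper's: compare transcendence degrees of $\E_1(L_bN^{\lambda})=\C[Z^2\Delta]$ and $\E_1(M)$ via Lemma \ref{eigenspace}. You are in fact more careful than the paper here: the paper simply asserts that ``if $L_bN$ is linearizable, it will be linearizable to $L_b$ (as the linear part is equal to $L_b$),'' whereas you justify this by computing $\mathrm{Fix}(L_bN^{\lambda})=\{0\}$, deducing $\sigma(0)=0$, and applying the chain rule to pin $M$ down as a linear conjugate of $L_b$ --- a genuine gap-filling step, since without control of $\sigma(0)$ the linear part is not obviously preserved. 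For $b$ a root of unity of order $m$, the paper stays algebraic: from $\varphi^{-1}L_bN^{\lambda}\varphi=L_b$ it takes $m$-th powers, using $L_b^m=I$ and the commutation $[E,D]=0$ to get $\varphi^{-1}N^{m\lambda}\varphi=I$, hence $N^{m\lambda}=I$, which is absurd for $\lambda\neq 0$. You instead reduce to the non-linearizability of $N^{m\lambda}$ itself and rule it out geometrically: its fixed locus is the singular quadric cone $V(XZ+Y^2)$, which cannot be isomorphic to the smooth linear subspace $\ker(M-I)$. Both arguments are valid; the paper's is shorter, while yours has the advantage of not relying on the specific normal form $L_b$ for the linearization (only on linearizability of some power), and it exhibits a reusable geometric invariant --- singularity of the fixed-point variety --- that the paper does not use.
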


\begin{proof}[Proof of lemma \ref{eigenvalue}]
Give weights $w(X)=3,w(Y)=1,w(Z)=-1$ making $A:=\C[X,Y,Z]$ into a graded algebra $\oplus_{n\in \Z} A_n$. $D$ and $L_b$ are homogeneous:
$L_b(A_n)=A_n$ and  $D(A_n)\subseteq A_n$. Because of the latter, $N^{\lambda}$ is homogeneous too: $N^{\lambda}(A_n)\subseteq A_n$
(actually ``$=$'' since it is an automorphism). Hence $L_bN^{\lambda}(A_n)=A_n$. For $L_b$ we have $L_b(p)=b^{n}p$ if $p\in A_n$.

We want to find all $p$ such that $L_bN^{\lambda}(p)=p$. It suffices to classify all such $p$ which are homogeneous. Let $n=\deg(p)$.
It now must hold that $N^{\lambda}(p)=L_b^{-1}(p)=b^{-n}p$.
Because of lemma \ref{derivation}, we have
$b^{-n}=1$ and $p\in \ker{\Delta \delta}$. Hence, since $b$ is no root of unity we get $n=0$, and so $p\in \ker{\Delta \delta}\cap A_0=\C[\Delta,Z]\cap A_0$.
Since $\Delta\in A_2$ and $Z\in A_{-1}$, we get that $p\in  \C[\Delta Z^2]$.
It is easy to check that such $p$ indeed satisfy $L_bN^{\lambda}(p)=p$.
\end{proof}

\begin{proof}[Proof of corollary \ref{end}]
Assume $L_bN^{\lambda}$ is linearizable.
We split the proof in two cases: \\
{\em Let $b^m=1$ for some $m\in \N^*$:}
Thus there exists some $\varphi\in \GA_3(\C)$ such that $\varphi^{-1}L_bN^{\lambda}\varphi=L_b$.
Thus $I=(L_b)^m=(\varphi^{-1}L_bN^{\lambda}\varphi)^m=\varphi^{-1}N^{m\lambda}\varphi$.
Thus $N^{m\lambda}$ must be the identity, which implies that $m=0$, contradiction.   \\
{\em $b$ is no root of unity:}
By lemma \ref{eigenvalue} $\mathcal{E}_1(L_bN^{\lambda})$ is isomorphic to $\C[\Delta Z^2]$.
By lemma \ref{eigenspace}, we must have that $E_1(L_bN^{\lambda})$ is isomorphic to $E_1(L_b)$.
However, their transcendence degrees differ.
\end{proof}

%\begin{remark}
%We suspect that the results of this section can be improved replacing ``linearizable'' by ``triangularizable''. This can be done by classifying $\E_1(\varphi)$
%for triangular automorphisms $\varphi$, and showing this cannot be mapped to $\E_1(L_b)$ by an automorphism of $\C^{[3]}$.
%\end{remark}

%\begin{remark}
%We did not address the question whether automorphisms $LN$ are tamizable where $L\in \GL_n(\C)\backslash \mathcal{L}$.
%One cannot expect an answer in this case: this is just another case where you compose two unrelated maps and ask if the result is tamizable.
%Generically, it would be surprising if a ``randomly'' chosen map is tamizable, but it is a very hard question in general to prove that something is not tamizable
%(see section \ref{final}).
%\end{remark}

\section{Generators of $\GA_n(\C)$ and conjectures}
\label{final}

\subsection{Tamizable automorphisms}

The following definition and the problems \ref{C1} and \ref{C2} were given to us by  A. Dubouloz.

\begin{definition} A polynomial automorphism $\varphi$ is called {\em tamizable} if there exists a
polynomial automorphism $\sigma$ such that $\sigma^{-1}\varphi\sigma$ is tame (in analog to linearizable and triangularizable).
\end{definition}

Now let us repeat the conjectures from the introduction:

\begin{prob}  \label{C1} Is $N$ tamizable? (Is every automorphism of $\C^{[3]}$ tamizable?)
\end{prob}

\begin{prob} \label{C2} Is $N$ tamizable by conjugation of an element of $\GA_2(\C[Z])$?
\end{prob}

Connected to this, we also mention the following problem, which we took from \cite[p.120]{Fre06}:

\begin{prob} \label{C-1}Every tame $\G_a$-action on $\C^3$ is conjugate to a triangular action.
\end{prob}

Note that the problems \ref{C1} and \ref{C-1} cannot both be true.

\subsection{Known conjectures}

Since the ``tame generators conjecture''  (which hardly anyone believed because of the automorphism $N$)
was disproved by Umirbaev-Shestakov in \cite{SU04, SU04a} (and also before this feat was accomplished),
several new conjectures have been made of ``understandable'' sets which could generate all of $\GA_n(\C)$ for any $n$.
We will mention several of them.

\begin{conj} \label{C3} Let $k$ be a field of characteristic zero. Then  $\GA_n(k)=\textup{GLND}_n(k)$, which is defined as  $<e^{\LND_n(k)}, \textup{GL}_n(k)>$.
\end{conj}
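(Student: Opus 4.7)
Conjecture \ref{C3} is posed as an open problem; what follows is therefore a strategic outline rather than a self-contained proof, indicating how one might proceed and where the essential obstruction lies.

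The low-dimensional cases can be dispatched directly. For $n=1$, every element of $\GA_1(k)$ is affine, hence already lies in $\GL_1(k)\subseteq \textup{GLND}_1(k)$. For $n=2$, the Jung--van der Kulk theorem writes every automorphism as a product of elements of $\GL_2(k)$ and de Jonqui\`ere automorphisms $(aX+p(Y),bY+c)$; such a map factors as an affine part composed with $(X+q(Y),Y)=\exp(q(Y)\partial_X)$, and $q(Y)\partial_X\in\LND_2(k)$. Hence $\GA_2(k)=\textup{GLND}_2(k)$ unconditionally.

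For $n\geq 3$ the strategy suggested by the present paper is to dispatch the known wild automorphisms first and then attempt a general descent. The Nagata automorphism $N=\exp(\Delta\delta)$ already lies in $e^{\LND_3(\C)}$ and is therefore in $\textup{GLND}_3(\C)$; the Shestakov--Umirbaev wild automorphisms are built from $N$ by composition with tame maps, so they inherit membership in $\textup{GLND}_3$. One then attempts to upgrade Theorem \ref{E.5}: if $F$ is shifted linearizable with $\sigma^{-1}(LF)\sigma=L'$ for $L,L'\in \GL_n(k)$ and the conjugating $\sigma$ lies in $\textup{GLND}_n(k)$, then $F=L^{-1}\sigma L'\sigma^{-1}\in \textup{GLND}_n(k)$. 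Section \ref{nagata} provides a model: the $\sigma$ realizing $L_{(a,b,c)}N^\lambda \sim L_{(a,b,c)}$ is the power $N^{\pm\mu}=\exp(\pm\mu\Delta\delta)$, visibly an exponent of an LND. A natural plan is thus to show inductively that the conjugating automorphisms produced by Corollary \ref{E.3} (and its iterations across possibly several homogeneity gradings) always belong to $\textup{GLND}_n(k)$, thereby placing every shifted-linearizable automorphism in $\textup{GLND}_n(k)$; combined with a positive answer to Meister's Linearization Problem from the introduction, this would settle the conjecture.

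The decisive obstacle is the absence of any structural grip on $\GA_n(k)$ for $n\geq 3$ beyond what the tame and Nagata-type constructions produce: there is no known exhaustive description of wild automorphisms to process case by case, and no analogue of the Jung--van der Kulk amalgamated-product structure. One would need a complexity measure on $\GA_n(k)$ (for example a Shestakov--Umirbaev-style degree estimate, or the rank/nilpotency class of the differential of $F-I$) together with a descent lemma showing that any automorphism above the minimal complexity can be strictly simplified by left- or right-multiplication with some element of $e^{\LND_n(k)}\cup \GL_n(k)$. Producing such a descent without assuming tameness is exactly where a genuinely new idea would be required, and it is the reason the conjecture should be regarded as on the same order of difficulty as the general structure problem for polynomial automorphism groups.
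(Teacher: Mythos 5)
This statement is posed in the paper as an open conjecture; the paper contains no proof of it, so there is no argument to compare yours against. You correctly recognize this and treat it as such, and your handling of the trivial cases is sound: $n=1$ is immediate, and for $n=2$ the Jung--van der Kulk theorem plus the factorization of a de Jonqui\`ere map as an affine map composed with $\exp(q(Y)\partial_X)$ does give $\GA_2(k)=\textup{GLND}_2(k)$ in characteristic zero (translations themselves being exponentials of constant, hence locally nilpotent, derivations). Two cautions on your $n\geq 3$ sketch. First, Shestakov--Umirbaev did not classify the wild automorphisms of $\C^3$; they exhibited a wildness criterion and showed $N$ is wild, so there is no known list of wild maps ``built from $N$'' to process. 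Second, even a positive answer to Meister's Linearization Problem would a priori only place automorphisms in $\textup{GLIN}_n$, not in $\textup{GLND}_n$: the paper's chain of inclusions keeps these two groups incomparable, and your requirement that the conjugating $\sigma$ itself lie in $\textup{GLND}_n$ is exactly the point where the induction has no base. None of this is a defect in your write-up --- you flag the gap yourself --- but be explicit that what you have is a reduction of one open problem to two others, not a proof strategy with a plausible endgame.
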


%Recently, Makar-Limanov has given a candidate counterexample in problemsession of \cite{OReport}.

\begin{conj}\label{C4}
$\GA_n(\C)=\textup{GLFD}_n(\C)$, which is defined as $<e^{\textup{LFD}_n(\C)}>$.
\end{conj}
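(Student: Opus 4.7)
The plan is to prove the nontrivial inclusion $\GA_n(\C) \subseteq \textup{GLFD}_n(\C)$ by factoring an arbitrary automorphism into pieces that are manifestly exponentials of locally finite derivations. The opposite inclusion is tautological.

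The easy cases are $n=1,2$. For $n=1$, every affine map satisfies $X\mapsto aX+b = \exp(b\partial_X)\exp((\log a)X\partial_X)$ for any branch of the logarithm, and both factors are exponentials of locally finite derivations. For $n=2$, invoke Jung--van der Kulk and decompose $F$ into affine and de Joncqui\`ere factors. Any $L\in \GL_n(\C)$ admits a matrix logarithm $M\in \mathfrak{gl}_n(\C)$, and the associated linear derivation $\sum_{i,j}M_{ij}X_j\partial_{X_i}$ is locally finite, so $\GL_n(\C)\subseteq \exp(\LFD_n(\C))$; translations $X\mapsto X+c$ are exponentials of constant derivations; and each de Joncqui\`ere map $(X+f(Y),Y) = \exp(f(Y)\partial_X)$ is the exponential of a locally nilpotent derivation. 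Hence the conjecture holds for $n\leq 2$.

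For $n\geq 3$ the first observation is that the tame subgroup still lies in $\textup{GLFD}_n(\C)$ by the same decomposition argument. Moreover, the shifted linearizability theorem (Theorem~\ref{E.5}), together with the explicit Nagata calculation (Theorem~\ref{NagataIsInLIN}), places $N$ and in fact $\exp(D)$ for every homogeneous $D\in \LFD_n(\C)$ inside $\textup{GLFD}_n(\C)$: one writes $\exp(D) = L^{-1}(L\exp(D))$, where $L\exp(D)$ is conjugate to a linear automorphism (hence in $\exp(\LFD_n(\C))$) and $L^{-1}\in \GL_n(\C)$ is itself already in $\exp(\LFD_n(\C))$.

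The main obstacle, and the reason the statement is posed as a conjecture rather than a theorem, is to show that the group generated by tame automorphisms together with these wild exponentials actually exhausts $\GA_n(\C)$. No structure theorem is known for $\GA_n(\C)$ with $n\geq 3$ playing the role of Jung--van der Kulk, and the Shestakov--Umirbaev analysis only locates individual wild automorphisms (such as $N$) rather than describing all of them. A natural intermediate reduction would be to first establish Conjecture~\ref{C3}, since $\LND_n(\C)\subseteq \LFD_n(\C)$ makes $\textup{GLND}_n(\C)\subseteq \textup{GLFD}_n(\C)$ automatic; but Conjecture~\ref{C3} is itself open and appears to require a genuinely new decomposition principle valid in dimension at least three.
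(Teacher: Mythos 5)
This statement is a conjecture in the paper, not a theorem: the authors give no proof of it, and indeed explicitly present $\GA_n(\C)=\textup{GLFD}_n(\C)$ as an open problem alongside Conjectures \ref{C3}, \ref{FuMa} and \ref{SHP}. So there is no proof in the paper to compare yours against. Your write-up is, accordingly, not a proof of the statement but a correct account of the easy part plus an honest admission that the substantive part is open. The parts you do prove are fine: the cases $n\le 2$ via Jung--van der Kulk, the surjectivity of the matrix exponential onto $\GL_n(\C)$, and the inclusion $\T_n(\C)\subseteq\textup{GLFD}_n(\C)$ are all correct and are essentially the same observations the authors make (cf.\ the chain of inclusions $\textup{TA}_n(\C)\subseteq\textup{GLND}_n(\C)\subseteq\textup{GLFD}_n(\C)$ in Section \ref{final}). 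The gap is exactly the one you name: for $n\ge 3$ nothing forces an arbitrary automorphism into the subgroup generated by exponentials of locally finite derivations, and no decomposition theorem is available.

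One conceptual slip worth flagging: you invoke Theorem \ref{E.5} and Theorem \ref{NagataIsInLIN} to place $\exp(D)$ (for homogeneous $D\in\LFD_n(\C)$) and $N$ inside $\textup{GLFD}_n(\C)$, but this is vacuous --- $\exp(D)\in e^{\LFD_n(\C)}$ is a generator of $\textup{GLFD}_n(\C)$ by definition, and $N=\exp(\Delta\delta)$ with $\Delta\delta$ locally nilpotent, hence locally finite. The shifted-linearizability machinery is needed for the genuinely nontrivial and smaller group $\textup{GLIN}_n(\C)$ of Conjecture \ref{new}, where membership of $N$ is not definitional; for $\textup{GLFD}_n(\C)$ it buys you nothing. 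Your suggested reduction via Conjecture \ref{C3} also points the implication the unhelpful way: $\textup{GLND}_n\subseteq\textup{GLFD}_n$ means Conjecture \ref{C3} would imply Conjecture \ref{C4}, but \ref{C3} is if anything harder (the authors remark that $\textup{GLFD}_n(\C)$ is obviously normal while normality of $\textup{GLND}_n(\C)$ is unclear), so this is not a reduction one should expect to be easier than the original problem.
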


For $k=\C$, conjecture \ref{C4} is different than conjecture \ref{C3},
as $\textup{GLND}_n(\C)\subseteq \textup{GLFD}_n(\C)$ but it is not clear if all exponents of
for example semisimple derivations are in the previous set. In fact, in our opinion, conjecture \ref{C4} is more natural, as it is obvious that $\textup{GLFD}_n(\C)$
is a  normal subgroup, but we do not know if the subgroup $\textup{GLND}_n(\C)$ is normal.

Another one is the following, from \cite{FM07} (where it is stated only for $k=\C$):

\begin{conj} \label{FuMa} Let $k$ be a field.
$\GA_n(k)=\textup{GLF}_n(k)$, where $\textup{GLF}_n(k)$ is the group generated by  all locally finite polynomial automorphisms
(which are polynomial automorphisms $F$ for which the sequence $\{\deg(F^n)\}_{n\in \N}$ is
bounded).
\end{conj}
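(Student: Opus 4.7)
The final statement is a conjecture from \cite{FM07}, not a theorem proved in the paper, so rather than a full argument I can only propose a line of attack and identify the main obstacle. My plan has three stages: verify the known inclusions, establish normality and reduce to a quotient, then attempt to trivialise the quotient.

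First, I would check that the classical generating candidates for $\GA_n(k)$ lie in $\textup{GLF}_n(k)$. Linear maps are trivially locally finite, since all iterates have degree~$1$. Exponentials of locally nilpotent derivations $D$ are locally finite: if $D^{N+1}(X_i) = 0$ for all $i$, then each component of $(\exp D)^m = \exp(mD)$ is a linear combination of the finitely many polynomials $\{D^j(X_i) : 0 \leq j \leq N\}$, so $\deg((\exp D)^m)$ is uniformly bounded in $m$. The same reasoning, applied to a finite-dimensional $D$-invariant subspace containing each $X_i$, handles exponentials of LFD derivations. Thus $\textup{GLND}_n(k), \textup{GLFD}_n(k) \subseteq \textup{GLF}_n(k)$, and Conjecture~\ref{FuMa} is weaker than Conjectures~\ref{C3} and~\ref{C4}.

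Second, I would show that $\textup{GLF}_n(k)$ is \emph{normal} in $\GA_n(k)$: for any $\sigma \in \GA_n(k)$ and any locally finite $F$, one has $\deg((\sigma F\sigma^{-1})^m) = \deg(\sigma F^m \sigma^{-1}) \leq \deg(\sigma)\deg(\sigma^{-1})\deg(F^m)$, which is uniformly bounded in $m$. Hence $\GA_n(k)/\textup{GLF}_n(k)$ is a well-defined group and the conjecture becomes the assertion that this quotient is trivial. For $n \leq 2$ Jung--van der Kulk closes the case (the tame generators being LF themselves), and for $n = 3$ the conspicuous wild candidate Nagata is already locally finite, since $N = \exp(\Delta\delta)$ with $\Delta\delta$ locally nilpotent.

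The hard part, where I expect the argument to stall, is the general case. A natural approach would be degree-lowering: given $F$ with $\deg(F^m) \to \infty$, left-multiply by carefully chosen LF maps to cancel its dominant behaviour, in the spirit of the Shestakov--Umirbaev elimination but now using the much richer toolkit of $\textup{GLF}_n$, including the shifted-linearizable maps introduced in this paper. The central obstacle is that we have essentially no structural description of a hypothetical non-LF automorphism in dimension $\geq 3$: existing invariants such as degree sequences, tangent derivations, and reduction modulo primes appear too coarse either to detect or to exclude unbounded-degree iterates. A decisive argument would likely require a new coordinate-free invariant on $\GA_n(k)/\textup{GLF}_n(k)$, together with a verification that it vanishes on every concrete automorphism one can build from LF generators.
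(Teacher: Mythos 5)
This statement is an open conjecture quoted from \cite{FM07}; the paper gives no proof of it, only the remark that $\textup{GLF}_n(k)$ is normal via the bound $\deg(\varphi^{-1}F^m\varphi)\le\deg(\varphi^{-1})\deg(F^m)\deg(\varphi)$, which is exactly the argument you reproduce. Your proposal is accurate in everything it actually asserts --- the inclusions $\textup{GLND}_n,\textup{GLFD}_n\subseteq\textup{GLF}_n$, the normality, and the fact that the Nagata map is itself locally finite --- and you rightly stop short of claiming a proof, so there is nothing to fault and nothing further in the paper to compare against.
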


The subgroup $\textup{GLF}_n(k)$
is normal for any field $k$:
If $F\in \textup{GLF}_n(\C)$, then the sequence $\{\deg(\varphi^{-1}F^m\varphi)\}_{m\in \N}$ is bounded by the bounded sequence
$\{\deg(\varphi^{-1})\deg(F^m)\deg(\varphi)\}_{m\in \N}$.

Then there is the following conjecture, which to our knowledge
originates from Shpilrain in \cite[problem 2, p. 16]{Shp} (there
stated for $k=\C$):

\begin{conj} \label{SHP} $\GA_n(k)=\textup{GSHP}_n(k)$, where $\textup{GSHP}_n(k)=<\GA_{n-1}(k[X_n]),\Aff_n(k)>$, interpreting
$\GA_{n-1}(k[X_n])$ as the automorphisms in $\GA_n(k)$ which fix the last variable.
\end{conj}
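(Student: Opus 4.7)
The plan is to proceed by dimension and reduce to structural questions about specific generating sets of $\GA_n(k)$. For $n=1$ every polynomial self-map of the line is affine, so $\GA_1(k)=\Aff_1(k)$ and the statement is immediate. For $n=2$ the Jung--van der Kulk theorem identifies $\GA_2(k)$ with the amalgamated product of $\Aff_2(k)$ and the de Jonqui\`ere subgroup, which is precisely $\GA_1(k[X_2])$ in this formulation, so Conjecture \ref{SHP} holds for free. The content therefore lies in $n\geq 3$, and the strategy I would pursue is to fix a generating set that is easier to handle than all of $\GA_n(k)$ and then verify the inclusion element by element.

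A natural first reduction is conditional on Conjecture \ref{C3}: if $\GA_n(k)$ is generated by $\GL_n(k)\subseteq\Aff_n(k)$ together with the exponentials $\exp(D)$ for $D\in\LND_n(k)$, it suffices to show each such $\exp(D)$ lies in $\textup{GSHP}_n(k)$. The Nagata example of Section \ref{nagata} is very encouraging here: since $(\Delta\delta)(Z)=0$, the automorphism $\exp(\Delta\delta)$ already fixes $Z$ and thus lies in $\GA_2(k[Z])\subseteq\textup{GSHP}_3(k)$ with no work at all. The key step would therefore be: for an arbitrary $D\in\LND_n(k)$, find $\sigma\in\Aff_n(k)$ such that $\sigma^{-1}D\sigma$ kills one of the variables $X_i$, i.e.\ admits a coordinate as a slice. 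If such a $\sigma$ exists, then $\sigma^{-1}\exp(D)\sigma\in\GA_{n-1}(k[X_i])$, closing the induction on $n$.

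The main obstacle is precisely this slice-by-a-coordinate step, which is essentially a form of the Variable/Embedding Problem and is open in general for $n\geq 3$. Even if one knows abstractly that $\ker D$ contains a polynomial which is a variable of $k^{[n]}$, one still has to promote it to $X_i$ by an affine change rather than by a general polynomial automorphism; and controlling the polynomial (as opposed to rational) nature of the slice is notoriously delicate. A fallback route I would try is to exploit Corollary \ref{E.3} and the shifted-linearizability phenomenon of this paper: the corollary produces, for many $\exp(D)$, an explicit conjugator which is itself an exponential, so if the conjugators could be shown to lie in $\textup{GSHP}_n(k)$ one would obtain $\exp(D)\in\textup{GSHP}_n(k)$ for a large class of $D$ at one stroke. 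However, this merely shifts the difficulty from the group $\GA_n(k)$ to the conjugating subgroup, and without a general slicing mechanism the step cannot currently be closed, which is why Conjecture \ref{SHP} remains open even for $n=3$.
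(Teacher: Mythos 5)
There is a fundamental mismatch here: the statement you are trying to prove is stated in the paper as an open \emph{conjecture} (attributed to Shpilrain), and the paper supplies no proof of it. In fact the surrounding text leans toward doubt rather than confidence: immediately after stating it, the paper records Shpilrain's own suggestion that counterexamples in dimension $3$ may come from $\exp(D)$ for $D\in\LND_3(\C)$ whose kernel contains \emph{no} coordinates, as constructed by Freudenburg in \cite{Fre97}; and Remark \ref{ZZ} notes that a failure of Conjecture \ref{finite} for $q=2^m$, $m\geq 2$, would disprove Conjecture \ref{SHP} outright via a parity obstruction. So there is no ``paper's own proof'' to compare yours against, and any complete proof you produced would be a new result, not a reconstruction.

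As for your proposal itself: you are candid that it does not close, and the gaps you name are real, but one of them is worse than ``open'' --- it is provably fatal as stated. Your key step asks for $\sigma\in\Aff_n(k)$ with $\sigma^{-1}D\sigma$ killing a variable $X_i$; this is equivalent to $\ker D$ containing the non-constant affine form $\sigma(X_i)$, hence containing a coordinate of $k^{[n]}$. Freudenburg's derivations cited in the paper have kernels containing no coordinates at all, so no affine (indeed no polynomial) change of variables can produce such a slice, and these are exactly the candidates Shpilrain singles out as potential counterexamples. In addition, your first reduction is conditional on Conjecture \ref{C3}, which is itself open, so even the conditional framework rests on an unproved hypothesis. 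The parts that do work --- $n=1$ trivially, $n=2$ via Jung--van der Kulk, and the observation that $N=\exp(\Delta\delta)$ fixes $Z$ and hence lies in $\GA_2(k[Z])\subseteq\textup{GSHP}_3(k)$ --- are correct but are exactly the cases where the conjecture has no content. The honest conclusion, which you reach, is that the statement remains a conjecture; it should not be presented as having a proof.
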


He suggests immediately that this conjecture may have
counterexamples in dimension 3 of the form $\exp(D)$ where $D\in
\LND_3(\C)$ which does not have coordinates in its kernel, as
constructed by G. Freudenburg in \cite{Fre97}. Also, it is not
clear if $\textup{GSHP}_n(k)$  is a  normal subgroup of
$\GA_3(k)$.

\subsection{The group $\textup{GLIN}_n(k)$}

Let us denote by $\textup{Lin}_n(k)$ the set of linearizable polynomial automorphisms.
We define $\textup{GLIN}_n(k):=<\textup{Lin}_n(k)>$ as the group generated by the linearizable automorphisms.
This is by construction the smallest normal subgroup of $\GA_n(k)$ containing $\GL_n(k)$.

\begin{lemma} \label{tame}
If $\kar(k)\not =2$, then $\textup{GLIN}_n(k)$ contains $\textup{T}_n(k)$.
\end{lemma}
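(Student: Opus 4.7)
The plan is to use the classical fact that $\T_n(k)$ is generated by $\Aff_n(k)$ together with the elementary (Jonqui\`ere-type) automorphisms $E_f := (X_1 + f(X_2,\ldots,X_n), X_2, \ldots, X_n)$ for $f \in k[X_2,\ldots,X_n]$; shifts in the other variables are obtained from these by conjugating with coordinate-permutations, which are linear. Since $\GL_n(k) \subseteq \textup{Lin}_n(k) \subseteq \textup{GLIN}_n(k)$ and the affine group is generated by $\GL_n(k)$ together with the translations, it suffices to show that every translation $T_v$ and every elementary $E_f$ lies in $\textup{GLIN}_n(k)$.

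The common idea is that, although $T_v$ and $E_f$ are not themselves linearizable (they have no fixed point, resp.\ are non-trivial unipotent), multiplying by a non-trivial scalar creates an eigenvalue different from $1$ that can be absorbed by a suitable conjugation. For a translation $T_v(X) = X + v$, I factor $T_v = A \circ B$ with $A(X) := 2X + v$ and $B := \tfrac{1}{2} I \in \GL_n(k)$. The affine map $A$ admits the unique fixed point $-v$, and the translation $\tau(X) := X - v$ conjugates $A$ to $2I$; hence $A \in \textup{Lin}_n(k)$ and $T_v = A \circ B \in \textup{GLIN}_n(k)$.

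For an elementary $E_f$, put $L := (2X_1, X_2, \ldots, X_n) \in \GL_n(k)$, so that $L \circ E_f = (2X_1 + 2f, X_2, \ldots, X_n)$. A direct check shows that the triangular automorphism $\sigma := (X_1 - 2f, X_2, \ldots, X_n) \in \GA_n(k)$ satisfies $\sigma^{-1}(L \circ E_f)\sigma = L$, so $L \circ E_f \in \textup{Lin}_n(k)$. Factoring $E_f = L^{-1} \circ (L \circ E_f)$ then puts $E_f$ in $\textup{GLIN}_n(k)$.

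The hypothesis $\kar(k) \neq 2$ enters exactly where we invert the scalar $2$ (to define $B = \tfrac{1}{2} I$, and to make the fixed-point equations for $A$ and $L \circ E_f$ solvable). I do not foresee a real obstacle; the only subtlety worth flagging is that the conjugating elements $\tau$ and $\sigma$ need not themselves belong to $\textup{GLIN}_n(k)$, which is harmless since the definition of $\textup{Lin}_n(k)$ allows conjugation by arbitrary elements of $\GA_n(k)$.
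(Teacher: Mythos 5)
Your proof is correct and, for the elementary maps, uses exactly the paper's identity: with $L=(2X_1,X_2,\ldots,X_n)$ you show $L\circ E_f=E_{-2f}LE_{2f}$ is linearizable and factor $E_f=L^{-1}(L\circ E_f)$, which is the paper's $E_f=L^{-1}(E_{-2f}LE_{2f})$. Your separate treatment of translations is a harmless (and in fact redundant) addition, since a translation in $X_1$ is just $E_f$ with $f$ constant and the rest follows by permutation conjugation.
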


\begin{proof}
It suffices to show the lemma for an elementary map
$E_{f}:=(X_1+f,X_2,\ldots,X_n)$ where $f\in k[X_2,\ldots,X_n]$. Define $L:=(2X_1,X_2,\ldots,X_n)$ which is in $\GL_n(k)$ as $\kar(k)\not =2$.
The result follows since $E_f=L^{-1} (E_{-2f}LE_{2f})$.
\end{proof}

\begin{remark}
The first author will show in a future preprint that $(X+Y^3,Y)\in
\TA_2(\F_2)$ is not in $\textup{GLIN}_2(\F_2)$.
\end{remark}

\begin{corollary} If $\kar(k)\not =2$, then $\textup{GLIN}_n(k)$ is the smallest normal subgroup of $\GA_n(k)$ containing $\textup{T}_n(k)$.
\end{corollary}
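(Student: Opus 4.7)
The plan is to verify the two properties that characterize a smallest normal subgroup: that $\textup{GLIN}_n(k)$ is itself a normal subgroup of $\GA_n(k)$ containing $\T_n(k)$, and that any other such normal subgroup must contain $\textup{GLIN}_n(k)$.

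The first property is almost free from what has already been established. Normality of $\textup{GLIN}_n(k)$ was remarked right after its definition (the set of linearizable automorphisms is closed under conjugation by $\GA_n(k)$, since if $G^{-1}FG=L'\in\GL_n(k)$ and $\tau\in\GA_n(k)$, then $(G^{-1}\tau)^{-1}(\tau^{-1}F\tau)(G^{-1}\tau)=L'$). The inclusion $\T_n(k)\subseteq\textup{GLIN}_n(k)$ is precisely the content of Lemma~\ref{tame}, where the hypothesis $\kar(k)\neq 2$ is used in an essential way through the map $L=(2X_1,X_2,\ldots,X_n)$.

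For the minimality, I would let $H$ be any normal subgroup of $\GA_n(k)$ with $\T_n(k)\subseteq H$, and show $\textup{GLIN}_n(k)\subseteq H$. Since $\textup{GLIN}_n(k)$ is by definition generated by $\textup{Lin}_n(k)$, it suffices to check that every linearizable automorphism $F$ lies in $H$. By definition there exist $G\in\GA_n(k)$ and $L\in\GL_n(k)$ with $G^{-1}FG=L$, i.e.\ $F=GLG^{-1}$. Now $L\in\GL_n(k)\subseteq\T_n(k)\subseteq H$, and the normality of $H$ in $\GA_n(k)$ gives $F=GLG^{-1}\in H$. This yields the desired inclusion.

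There is no genuine obstacle here; the entire substance of the corollary is absorbed into Lemma~\ref{tame} (which in turn rests on the identity $E_f=L^{-1}(E_{-2f}LE_{2f})$ and hence on $\kar(k)\neq 2$). Once that lemma is in hand, the corollary is a two-line consequence of the fact that conjugating $\GL_n(k)$ inside $\GA_n(k)$ produces exactly the generators of $\textup{GLIN}_n(k)$.
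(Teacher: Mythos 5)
Your proof is correct and follows essentially the same route as the paper, which leaves the corollary as an immediate consequence of Lemma~\ref{tame} together with the remark that $\textup{GLIN}_n(k)$ is by construction the smallest normal subgroup containing $\GL_n(k)$; your minimality argument (conjugates of $\GL_n(k)$ generate $\textup{GLIN}_n(k)$, so any normal $H\supseteq \T_n(k)\supseteq\GL_n(k)$ contains it) is exactly what that remark encodes. The only blemish is the parenthetical conjugation identity, where the conjugating element should be $\tau^{-1}G$ rather than $G^{-1}\tau$ (one checks $(\tau^{-1}G)^{-1}(\tau^{-1}F\tau)(\tau^{-1}G)=L'$); this does not affect the argument.
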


In light of this lemma, {\em and} the result  of theorem \ref{NagataIsInLIN} (being $N\in \textup{GLIN}_n(\C)$),
it is natural to pose the following (as far as we know, new)
conjecture:

\begin{conj} \label{new}  $\textup{GLIN}_n(k)=\GA_n(k)$ (if $\kar(k)\not =2$).
\end{conj}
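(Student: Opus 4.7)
The plan is to break the problem into two layers: an \emph{easy} reduction that already follows from the machinery of this paper, and a residual step which is essentially a strengthening of Conjecture~\ref{C4}.

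First I would observe that Lemma~\ref{tame} already gives $\textup{T}_n(k)\subseteq \textup{GLIN}_n(k)$, so the issue is purely the wild part of $\GA_n$. Then I would exploit Theorem~\ref{E.5} in the form in which it is really a statement about $\textup{GLIN}$: if $D\in \LFD_n(\C)$ is homogeneous of nonzero degree with respect to some monomial grading, then by Theorem~\ref{E.5} there exists $L\in\GL_n(\C)$ such that $L\exp(D)$ is linearizable. Writing
\[
\exp(D)=L^{-1}\cdot\bigl(L\exp(D)\bigr),
\]
and noting that both factors lie in $\textup{Lin}_n(\C)$, we see $\exp(D)\in\textup{GLIN}_n(\C)$. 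The same reasoning, applied to the homogeneous grading on $\C[X,Y,Z]$ with weights $(1,0,-1)$, already gives the concrete witness $N\in\textup{GLIN}_3(\C)$ used in the introduction, and extends to any wild automorphism we currently know how to construct explicitly (Nagata-type, van\,den\,Essen--type, Freudenburg triangularizations, etc.), because all such constructions feed on homogeneous LNDs.

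Second, I would try to upgrade this observation into a full generating statement. What one really wants is the following strengthened form of Conjecture~\ref{C4}:
\[
\GA_n(k)=\bigl\langle\, \GL_n(k),\ \exp(D)\ :\ D\in\LFD_n(k)\text{ homogeneous w.r.t.\ some monomial grading} \,\bigr\rangle.
\]
Granting this, the first step immediately closes the conjecture. Concretely, I would attack the homogeneous-LFD generation in two pieces: (a) show $\exp(D)\in\textup{GLIN}_n(k)$ for any $D\in\LFD_n(k)$ by decomposing $D=D_s+D_n$ into its semisimple and nilpotent parts (both still LFD and commuting, hence $\exp(D)=\exp(D_s)\exp(D_n)$), and then linearizing $D_s$ by a coordinate change so that it becomes diagonal, hence homogeneous; the nilpotent part $D_n$ preserves the grading associated to $D_s$, so in the new coordinates $D_n$ decomposes into a finite sum of $D_s$-eigencomponents, each of which is homogeneous and locally nilpotent. (b) Collect these eigencomponents and feed them into the easy reduction.

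The hard part will be step~(b) together with the full reduction to LFDs. Even step~(a) is subtle: the decomposition $D=D_s+D_n$ of a locally finite derivation is formal, and one needs the \emph{individual} eigenpieces of $D_n$ to be polynomial derivations, not merely formal ones, which is known for $n\le 3$ but not in general. And the reduction $\GA_n(\C)=\textup{GLFD}_n(\C)$ itself is Conjecture~\ref{C4}, still open; so outside dimension $\le 3$ our proposal is conditional on it. In dimension $3$, where Umirbaev--Shestakov's structural results on wild automorphisms are available, I would finish by verifying that every ``reducible'' wild automorphism in their sense is a product of a tame map and an exponent of a homogeneous LND modulo conjugation, thereby landing inside $\textup{GLIN}_3(\C)$ via Lemma~\ref{tame} and the easy reduction above.
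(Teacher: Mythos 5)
The statement you are addressing is posed in the paper as an open \emph{conjecture}; the paper contains no proof of it, and your proposal does not supply one either. What your first paragraph establishes is correct and is precisely the content the paper already extracts from Theorem \ref{E.5} and Lemma \ref{tame}: since $L\exp(D)$ is linearizable for a suitable $L\in\GL_n(\C)$ whenever $D$ is a homogeneous locally finite derivation of nonzero degree, the factorization $\exp(D)=L^{-1}\cdot\bigl(L\exp(D)\bigr)$ exhibits $\exp(D)$ as a product of two linearizable maps, hence an element of $\textup{GLIN}_n(\C)$; this is exactly how the paper concludes $N\in\textup{GLIN}_3(\C)$ and why it records the chain $\T_n(\C)\subsetneq\textup{GLIN}_n(\C)\subseteq\GA_n(\C)$. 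But this only yields membership of a specific class of automorphisms in $\textup{GLIN}_n$; it does not touch the reverse inclusion $\GA_n(k)\subseteq\textup{GLIN}_n(k)$, which is the entire content of the conjecture.

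Your second step is where the argument collapses. You reduce the conjecture to the claim that $\GA_n(k)$ is generated by $\GL_n(k)$ together with exponentials of monomially homogeneous locally finite derivations; this is a strengthening of Conjecture \ref{C4}, itself wide open, and a proof conditional on an open conjecture is not a proof. Several intermediate claims are also problematic in their own right: (i) ``linearizing $D_s$ by a coordinate change so that it becomes diagonal'' is an instance of the linearization problem for algebraic group actions, a deep open problem (and false for reductive groups in higher dimensions), not a routine step; (ii) even granting such a linearization, some eigencomponents of $D_n$ may be homogeneous of degree $0$ with respect to the grading defined by $D_s$, and precisely in that case Theorem \ref{E.5} does not apply --- indeed subsection \ref{NLN} shows that $L_bN^{\lambda}$ is \emph{not} linearizable there, so your ``easy reduction'' fails for those pieces; (iii) the conjecture is stated over an arbitrary field with $\kar(k)\not=2$, whereas the exponential/LFD machinery you invoke requires characteristic $0$ (the paper itself notes that among these subgroups only $\textup{GLF}_n$ and $\textup{GLIN}_n$ can be defined over any field); and (iv) the appeal to Shestakov--Umirbaev to finish dimension $3$ is unsubstantiated --- their theory provides no decomposition of wild automorphisms into tame maps and exponentials of homogeneous derivations. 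In short, the verifiable portion of your proposal reproves what the paper already proves, and the remainder is a restatement of open problems.
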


For $\kar(k)=2$, one might replace $\textup{GLIN}_n(k)$ by the
smallest normal subgroup of $\GA_n(k)$ containing $T_n(k)$. We
remark that for $k=\C$ we have the following chain of inclusions:
\[
\begin{array}{lllll}
&&\textup{GLIN}_n(\C)\\
&\subsetneq&&\subseteq\\
\textup{TA}_n(\C)&&&&\textup{GLFD}_n(\C)\subseteq \textup{GLF}_n(\C)\subseteq \GA_n(\C)\\
&\subsetneq&&\subseteq\\
&&\textup{GLND}_n(\C)\\
\end{array}
 .\]
Any inequality or equality in this chain would be very interesting.
(The set $\textup{GSHP}_n(\C)$ is sort of separate.)
Remark that $\textup{GLFD}_n,\textup{GLF}_n$ and $\textup{GLIN}_n$ are all normal, only the latter two can be defined over any field.

%, but we remark that $\textup{GSHP}_3(\C)\subseteq \textup{GLF}_3(\C)$.

Let us recall the following conjecture from \cite{Mau01,Mau08}:

\begin{conj} \label{finite} Let $F\in \GA_n(\mathbb{F}_q)$. If $q$ is even and $q\not =2$, then only half (the even ones) of the bijections of
$(\mathbb{F}_q)^n\lp (\mathbb{F}_q)^n$
are given by maps in $ \GA_n(\mathbb{F}_q)$.
\end{conj}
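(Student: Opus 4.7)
The conjecture amounts to computing the image of the permutation representation $\rho\colon \GA_n(\mathbb{F}_q) \to \mathrm{Sym}(\mathbb{F}_q^n)$, which should equal $A_{q^n}$ for $q=2^k$ with $k\geq 2$ and $n\geq 2$. (The $n=1$ case fails for $q\geq 8$ on cardinality grounds, since $|\Aff_1(\mathbb{F}_q)|=q(q-1)<q!/2$, so the conjecture must implicitly restrict to $n\geq 2$.) The plan is to establish the two containments separately: $\mathrm{Im}(\rho)\subseteq A_{q^n}$ by a sign computation on generators, and $\mathrm{Im}(\rho)\supseteq A_{q^n}$ by combining $2$-transitivity of the affine subgroup with Jordan's theorem.

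\textbf{Upper bound.} Since $\mathrm{sgn}\circ\rho\colon \GA_n(\mathbb{F}_q)\to\{\pm 1\}$ factors through the abelianization of $\GA_n(\mathbb{F}_q)$, it suffices to verify its triviality on generators. For $L\in \GL_n(\mathbb{F}_q)$, classical perfectness of $\GL_n(\mathbb{F}_q)$ (for $n\geq 2$ and $(n,q)\notin\{(2,2),(2,3)\}$, neither of which occurs here) gives $\mathrm{sgn}(L)=+1$. For an elementary shear $E_{i,f}=(X_1,\dots,X_i+f,\dots,X_n)$, the action on $\mathbb{F}_q^n$ fibers over the projection away from the $i$-th coordinate; on each fiber it is either the identity or a nonzero translation of $(\mathbb{F}_q,+)$, and the latter is a product of $q/2=2^{k-1}$ disjoint transpositions, which is even for $k\geq 2$. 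Thus each generator has trivial sign. The subtlety is that in dimensions $n\geq 3$ it is not known whether $\GL_n(\mathbb{F}_q)$ together with elementary shears generates all of $\GA_n(\mathbb{F}_q)$ (mirroring the Shestakov--Umirbaev phenomenon \cite{SU04} in characteristic zero); handling possibly ``wild'' automorphisms requires either proving a perfectness statement $\GA_n(\mathbb{F}_q)=[\GA_n(\mathbb{F}_q),\GA_n(\mathbb{F}_q)]$ directly, or computing the sign on exponentials of locally finite derivations fiber-by-fiber using the homogeneous decomposition supplied by Theorem \ref{E.5}.

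\textbf{Lower bound.} The affine subgroup $\Aff_n(\mathbb{F}_q)=\GL_n(\mathbb{F}_q)\ltimes \mathbb{F}_q^n$ already lies in $\mathrm{Im}(\rho)$ and acts $2$-transitively, hence primitively, on $\mathbb{F}_q^n$. By Jordan's theorem, any primitive permutation group of degree $m$ containing a $3$-cycle contains $A_m$. It therefore suffices to construct a single polynomial automorphism whose induced permutation is a $3$-cycle. A candidate construction: start with a diagonal element $L=\mathrm{diag}(\omega,1,\dots,1)\in\GL_n(\mathbb{F}_4)$, where $\omega\in\mathbb{F}_4^*$ has order $3$; this gives a disjoint union of $4^{n-1}$ three-cycles. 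Then, using Lagrange interpolation, fabricate elementary shears whose conjugation action ``collapses'' all but one of these disjoint three-cycles to the identity by pairwise cancellation, leaving a single three-cycle. The general case $q=2^k$ is reduced to $q=4$ by embedding a three-cycle through the subfield inclusion $\mathbb{F}_4\hookrightarrow\mathbb{F}_q$ when available (i.e.\ when $k$ is even), and by a slightly more elaborate commutator argument otherwise; this parallels the techniques used for odd $q$ in \cite{Mau01,Mau08}.

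\textbf{Main obstacle.} The principal difficulty is the upper bound: extending the sign calculation from affine and elementary generators to \emph{all} of $\GA_n(\mathbb{F}_q)$ in the absence of a tame generating set for $n\geq 3$. A proof of perfectness of $\GA_n(\mathbb{F}_q)$, or equivalently of the vanishing of every homomorphism $\GA_n(\mathbb{F}_q)\to\{\pm 1\}$, is where the bulk of the work is expected to lie. The lower bound, by contrast, is essentially an explicit interpolation problem, delicate but not deep.
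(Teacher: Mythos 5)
This statement is a \emph{conjecture} in the paper, not a theorem: the authors give no proof, and the only thing they assert as known (citing Theorem 2.3 of \cite{Mau01}) is the partial result that the \emph{tame} automorphisms over $\F_q$ induce exactly the even permutations of $(\F_q)^n$ when $q=4,8,16,\ldots$. Your proposal is therefore not comparable to a proof in the paper, because none exists; what it can be measured against is that known partial result, and against the reason the full statement remains open.

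Measured that way, your plan contains a genuine and decisive gap, and to your credit you name it yourself: the upper bound $\mathrm{Im}(\rho)\subseteq A_{q^n}$ is only established on tame generators (where your fiberwise count of $q/2=2^{k-1}$ transpositions per shear is correct, and the perfectness of $\GL_n(\F_q)$ handles the linear part), while for $n\geq 3$ it is unknown whether $\GA_n(\F_q)$ is tamely generated, so a possibly wild automorphism could a priori induce an odd permutation. Neither of your two suggested repairs closes this: perfectness of $\GA_n(\F_q)$ is itself open, and Theorem \ref{E.5} of this paper concerns exponentials of homogeneous locally finite derivations over $\C$ (it needs $e^{\alpha\beta}$ and division by factorials), so it supplies no ``homogeneous decomposition'' of an arbitrary element of $\GA_n(\F_q)$, let alone of a wild one. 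This missing step is precisely why the statement is posed as a conjecture; indeed Remark \ref{ZZ} of the paper points out that a single wild automorphism inducing an odd permutation would refute the tame generators statement for $\F_q$ ``ridiculously simply.'' Your lower bound via $2$-transitivity of $\Aff_n(\F_q)$ and Jordan's theorem is a plausible alternative route to the tame half of the story (the paper instead just cites \cite{Mau01}), but the $3$-cycle construction is left as an unexecuted interpolation exercise, so even that half is only a sketch. Your observation that the conjecture must implicitly assume $n\geq 2$ (since $|\Aff_1(\F_q)|=q(q-1)<q!/2$ for $q\geq 8$) is a fair and correct caveat about the statement as written. In summary: the plan correctly locates the difficulty, but what it actually proves is only what was already known, and the conjectural content is untouched.
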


Here, we say that a bijection of $(\mathbb{F}_q)^n$is even, if it is even if seen as an element of the permutation group on $q^n$ elements.
In \cite{Mau01}, theorem 2.3, it is concluded that the tame automorphisms over $\mathbb{F}_q$ give all bijections in case $q$ is odd or $q=2$, and only the even bijections in case
$q=4,8,16,\ldots$.

\begin{remark} \label{ZZ}If the conjecture \ref{finite} would {\em not} be true for some $q=2^m, m\geq 2$, this would give a
{\em ridiculously simple counterexample} to the the (already rejected) ``tame generators problem'' for $\F_{q}$.
Also, it will imply that conjecture \ref{SHP} is not true, and
the smallest normal subgroup of $\GA_n(k)$ containing $T_n(k)$ does not equals $\GA_n(k)$ (and en passant conjecture \ref{new} is not true for $k=\F_q$).
\end{remark}

The remark follows from the fact that any conjugate of an even
bijection is again even, and from the fact that any
$F=(F_1(X,Y,Z),F_2(X,Y,Z),Z)\in GA_2(\F_{2^m}[Z]),$  $m\geq 2$, is
even: fix $Z=a\in \F_{2^m}$, and the map
$F_a:=(F_1(X,Y,a),F_2(X,Y,a),a)$ is a tame map on
$\F_{2^m}^2\times \{a\}$ by Jung-van der Kulk-theorem (and hence
even because of theorem 2.3 in \cite{Mau01}).

%Variants to conjecture \ref{finite} and implications like in remark  \ref{ZZ} will be addressed by the first author in a future paper. \\
\ \\
\ \\
{\bf Acknowledgements:}
The authors would like to thank the staff of the Mathematisches Forschungsinistitut Oberwolfach for their hospitality.

\end{document}